\newtheorem{theorem}{Theorem}[section]
\newtheorem{definition}{Definition}[section]
\newtheorem{proposition}{Proposition}[section]
\newtheorem{lemma}{Lemma}[section]
\newtheorem{remark}{Remark}[section]
\newtheorem{ex}{Example}[section]
\theoremstyle{definition}
\newtheorem{condition}{Condition}[section]
\newcommand{\lag}{\left \langle}
\newcommand{\rog}{\right \rangle}
\newcommand{\df}{\mathrel{\mathop:}=}
\crefname{condition}{Condition}{Conditions}
\crefname{ex}{Example}{Examples}
\crefname{remark}{Remark}{Remarks}
\author{Matteo Bianchi\\{\small Universit{\`a} degli Studi di Milano,}\\{\small Department of Computer Science}\\ {\small Via Comelico 39/41, 20135, Milano, Italy} \\ {\small\url{matteo.bianchi@unimi.it}}}
\date{}
\title{A temporal semantics for Nilpotent Minimum logic}
\begin{document}
\maketitle
\begin{abstract}
In \cite{ban} a connection among rough sets (in particular, pre-rough algebras) and three-valued {\L}ukasiewicz logic {\L}$_3$ is pointed out. In this paper we present a temporal like semantics for Nilpotent Minimum logic NM (\cite{fod,mtl}), in which the logic of every instant is given by {\L}$_3$: a completeness theorem will be shown. This is the prosecution of the work initiated in \cite{tmpg} and \cite{temp}, in which the authors construct a temporal semantics for the many-valued logics of Gödel (\cite{godint}, \cite{dum}) and Basic Logic (\cite{haj}).
\end{abstract}
\medskip
\begin{quote}
\textit{``Dedicated to a very important person: feel free to choose the rhythm and the direction of your walks, ever...''} 
\end{quote}	 
\section{Introduction and motivations}
Rough sets were introduced by Z. Pawlak in \cite{paw}, as an alternative to fuzzy sets. Triangular norms (see \cite{kmp}) are particular functions that can be used to define some operations in fuzzy sets theory or also as the semantical counterpart of the conjunction connective of a many-valued logic. For example, the book \cite{haj} introduces a formal framework of many-valued logics strictly connected with continuous t-norms and their residua: if a continuous t-norm can be seen as the semantical counterpart of a conjunction, the associated residuum plays the analogous role for the implication connective. Nilpotent minimum t-norm was introduced by J. Fodor in \cite{fod} as an example of a left-continuous but not continuous t-norm: left-continuous t-norms assume a particular relevance, since left-continuity is a necessary and sufficient condition for the existence of a residuum (\cite{beg}).

Nilpotent Minimum logic (NM) was introduced in \cite{mtl} as the logical system associated to the variety of algebras generated by $[0,1]_\text{NM}$, an algebraic structure induced by Nilpotent Minimum t-norm (see \Cref{subsec:sem} for definitions).

\medskip
\noindent NM has been the subject of various independent investigations. For example:
\begin{itemize}
\item Combinatorial aspects and description of the free algebras \cite{agm,abm,bus}.
\item States and connection with probability theory (\cite{ag}).
\item Computational complexity for satisfiability and tautologicity problems (\cite{nmcomp}).
\item Connections with others non-classical logics: for example, Nelson's constructive logic with strong negation CLSN (\cite{bc}). In particular, in \cite{bc} it is shown that NM is equivalent to CLSN plus the prelinearity axiom, that is \\\mbox{$(\varphi\to\psi)\vee(\psi\to\varphi)$}.
\item Extensions with truth constants, in the propositional and in the first-order case (\cite{egnfirst,egn,egn1,egn2}).
\end{itemize}
We now arrive at the relation among NM and rough sets. In \cite{ban} it is shown that every pre-rough algebra is term equivalent\footnote{Intuitively, this means that the operations of the two algebraic structures are ``inter-definable''. See \cite{mmt} for details.} to an algebra belonging to the variety generated by three elements Wajsberg-algebra $\mathbf{L}^w_3$. This last algebra is the semantical counterpart of three valued logic {\L}$_3$. 

In this paper we show that also NM is strictly connected with three valued \L ukasiewicz logic. We present a temporal like semantics for Nilpotent Minimum logic NM (\cite{fod,mtl}), in which the logic of every instant is given by {\L}$_3$, and the temporal flow is given by any totally ordered set: a completeness theorem will be shown. This provide a first connection among rough sets and Nilpotent Minimum logic.
\section{Preliminaries}\label{sec:1}
\subsection{Rough sets, pre-rough algebras, and three-valued Wajsberg-algebras}\label{sec:rough}
Rough sets were introduced by Z. Pawlak in \cite{paw}. The basic notion is the one of approximation space, which is a pair $\lag X,R \rog$, $X$ being a non-empty set (the domain of discourse) and $R$ an equivalence relation on it, representing an indiscernibility relation.
If $A\subseteq X$, the lower approximation $\underline{A}$ of $A$ in the approximation space $\lag X,R \rog$ is the union of equivalence classes contained in $A$, while its upper approximation $\overline{A}$ in $\lag X,R \rog$ is the union of equivalence classes properly intersecting $A$. $\underline{A}$ $(\overline{A})$ is interpreted as the collection of those objects of the domain X that definitely (possibly) belong to $A$.
A triple $\lag X,R,A \rog$, with $A\subseteq X$, is called a rough set. Pre-rough algebras, on the other side, are algebraic structures initially introduced in \cite{banc}, with the aim to find an algebraic framework to cope with rough sets. Here we mention the formulation of \cite{ban}. 

A pre-rough algebra is an algebraic structure with signature $\lag A,\leq,\sqcap,\sqcup,\neg,L,\Rightarrow,0,1\rog$ such that
\begin{itemize}
\item[P1]$\lag A,\leq,\sqcap,\sqcup,\neg,0,1\rog$ is a bounded distributive lattice with bottom element $0$ and top element $1$,
\item[P2]$\neg\neg a=a$,
\item[P3]$\neg(a\sqcup b)=\neg a\sqcap \neg b$,
\item[P4]$La\leq a$,
\item[P5]$L(a\sqcap b)=La\sqcap Lb$,
\item[P6]$LLa=La$,
\item[P7]$L1=1$,
\item[P8]$MLa=La$,
\item[P9]$\neg La\sqcup La=1$,
\item[P10]$L(a\sqcup b)=La\sqcup Lb$,
\item[P11]$La\leq Lb$, $Ma\leq Mb$ imply $a\leq b$,
\item[P12]$a\Rightarrow b=(\neg La\sqcup Lb)\sqcap(\neg Ma\sqcup Mb)$,
\end{itemize}
where $Ma\df\neg L\neg a$, and $a,b\in A$.

\bigskip
\noindent Wajsberg algebras were initially introduced in \cite{frt} as a counterpart of infinite-valued \L ukasiewicz logic: they form an algebraic variety (in the sense of universal algebra, see \cite{mmt}). 

\noindent A particular subvariety of them is the one generated by the algebra $\mathbf{L^W_3}=\lag \left\{0,\frac{1}{2},1\right\},\rightarrow,\neg,1\rog$, where $x\rightarrow y=\min(1,1-x+y)$ and $\neg x=1-x$, for every $x,y\in \left\{0,\frac{1}{2},1\right\}$. We call this variety of algebras three valued Wajsberg algebras. 

In general, a three valued Wajsberg algebra is a structure of the form $\lag A,\rightarrow,\neg,1\rog$ such that, for every $a,b,c\in A$
\begin{align}
\tag	{w1}&a\to(b\to a)=1\\
\tag	{w2}&(a\to b)\to ((b\to c)\to(a\to c))=1\\
\tag	{w3}&((a\to \neg a)\to a)\to a=1\\
\tag	{w4}&(\neg a\to \neg b)\to (b\to a)=1
\end{align}
The result that we mentioned in the introduction is the following:
\begin{theorem}[\cite{ban}]\label{teo:l3eq}
Pre-rough algebras are term-equivalent to three valued Wajsberg algebras.
\end{theorem}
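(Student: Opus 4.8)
The plan is to exhibit explicit term translations in both directions and to check that they are mutually inverse. Recall that the two classes are term-equivalent precisely when there are terms defining each primitive operation of one signature in terms of the other, such that (i) the translated operations satisfy the target axioms, and (ii) translating one way and then back reproduces each original primitive operation as a provable identity. I would organise the argument around the fact that both classes are generated (as a quasivariety, resp.\ variety) by their three-element members: three-valued Wajsberg algebras by $\mathbf{L}^W_3$ by definition, and pre-rough algebras by the three-element pre-rough algebra via the known subdirect representation of pre-rough algebras as subdirect products of that algebra. This lets me reduce every equational verification to a finite computation on three elements and then lift it.

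From a three-valued Wajsberg algebra $\lag A,\to,\neg,1\rog$ I would define the pre-rough operations by $0\df\neg 1$, $a\sqcup b\df(a\to b)\to b$, $a\sqcap b\df\neg(\neg a\sqcup\neg b)$, the order $a\leq b$ by $a\to b=1$, the modal operator $La\df\neg(a\to\neg a)$ (i.e.\ the product $a\odot a$ of the associated MV-algebra), $Ma\df\neg L\neg a$, and finally $\Rightarrow$ by the right-hand side of axiom P12. That $\lag A,\leq,\sqcap,\sqcup,\neg,0,1\rog$ is a bounded distributive lattice with involution satisfying P1--P4 is the standard lattice theory of Wajsberg/MV-algebras. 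The phenomenon specific to the three-valued case is that axiom (w3) forces every element of the form $La$ to be complemented, i.e.\ Boolean; once $\neg La\sqcup La=1$ (P9) is established together with this Boolean behaviour, the remaining $L$-axioms P5--P8 and P10 reduce to the observations that $La$ ``tests whether $a$ is the top'' and that $M$ fixes $0$ and $1$, all of which I would verify on $\mathbf{L}^W_3$ and then lift.

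Conversely, from a pre-rough algebra I keep $\neg$ and $1$ and define the {\L}ukasiewicz arrow by
\[
a\to b\df(\neg a\sqcup b)\sqcup(a\Rightarrow b).
\]
The motivation, read off on $\mathbf{L}^W_3$, is that $\neg a\sqcup b$ is the Kleene implication $\max(1-a,b)$, which agrees with the {\L}ukasiewicz value everywhere except at the single diagonal point $a=b=\tfrac12$, while $a\Rightarrow b$ equals $1$ exactly when $a\leq b$ and $0$ otherwise; taking their join raises precisely that one value from $\tfrac12$ to $1$ and leaves all others untouched, yielding $\min(1,1-a+b)$. I would then verify the Wajsberg axioms (w1)--(w4) for this arrow; on $\mathbf{L}^W_3$ this is immediate since $\to$ is literally the {\L}ukasiewicz implication, and it lifts to the whole class by generation.

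It then remains to check that the two passages compose to the identity. In one direction $(a\to b)\to b$ recovers $a\sqcup b$ and $\neg(a\to\neg a)$ recovers $La$, so that P12 rebuilds the original $\Rightarrow$; in the other direction the displayed term rebuilds the original $\to$. Each of these is an identity, so it suffices to confirm it on the three-element generator. The one genuinely non-equational axiom is P11, a quasi-identity; here I would use that quasivarieties are closed under subalgebras and products, so that, P11 holding in the subdirectly irreducible factors (the three-element algebra and the Boolean $\mathbf 2$, where $L$ and $M$ collapse to the identity), it holds in every subdirect product and hence throughout the class. The main obstacle I anticipate is exactly this bookkeeping around $L$: one must show that the translated $L$ is Boolean-valued and interacts correctly with $\sqcap,\sqcup,M$ so as to satisfy P5--P11, and dually that the reconstructed arrow genuinely satisfies the transitivity-type axiom (w2). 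Reducing both to the finite generating algebra, and justifying that reduction through the subdirect representations, is where the real work lies.
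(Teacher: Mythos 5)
First, a caveat on the comparison itself: the paper offers \emph{no} proof of this statement --- it is imported verbatim from \cite{ban} --- so there is no internal argument to measure yours against, and your attempt has to be judged on its own terms. On those terms, your skeleton and your terms are right: $La\df\neg(a\to\neg a)$, $a\sqcup b\df(a\to b)\to b$, $\Rightarrow$ via P12, and the reconstruction $a\to b\df(\neg a\sqcup b)\sqcup(a\Rightarrow b)$ are exactly the standard interdefinitions, and your computations on the three-element algebras (the Kleene implication disagreeing with the {\L}ukasiewicz one only at the point $(\tfrac12,\tfrac12)$, $a\Rightarrow b$ being the characteristic function of $a\leq b$) are correct. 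The direction from three-valued Wajsberg algebras to pre-rough algebras is also essentially in order: equational axioms lift from $\mathbf{L}^W_3$ to the variety it generates, and your handling of the quasi-identity P11 --- verify it on the subdirectly irreducible members $\mathbf{L}^W_3$ and $\mathbf{2}$, then use that quasi-identities are preserved by subalgebras and products together with Birkhoff's subdirect decomposition --- is sound, granted the standard fact (Jónsson's lemma for a finitely generated congruence-distributive variety) that these are the only subdirect irreducibles. One small caveat even here: the paper defines three-valued Wajsberg algebras equationally by (w1)--(w4), so ``generated by $\mathbf{L}^W_3$ by definition'' is not quite available; the coincidence of the equational class with the generated variety is Wajsberg's completeness theorem, which is citable but is an ingredient you are silently using.

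The genuine gap is on the pre-rough side. Every verification you perform there --- the Wajsberg axioms (w1)--(w4) for the defined arrow, and the inversion identities recovering $\sqcup$, $L$, $\Rightarrow$ --- is reduced to a finite check in the three-element pre-rough algebra, and that reduction rests entirely on ``the known subdirect representation of pre-rough algebras as subdirect products of that algebra,'' which you assert but neither prove nor precisely cite. This is not bookkeeping: it is in substance Moisil's subdirect representation theorem for three-valued {\L}ukasiewicz--Moisil algebras transported to the signature P1--P12, and deriving it from those axioms is real work (one forms the Boolean algebra of exact elements $\left\{a:La=a\right\}$, separates points by its ultrafilters using P11, and checks that each induced three-valued quotient respects $L$ and $\Rightarrow$). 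Moreover, for pre-rough algebras specifically this representation is usually obtained in the literature \emph{via} their identification with three-valued Wajsberg/{\L}ukasiewicz--Moisil algebras --- that is, via a result of the same depth as the theorem you are proving --- so citing it as ``known'' courts circularity. You concede the point yourself in your final sentence (``justifying that reduction through the subdirect representations is where the real work lies''), but that is precisely the work the proposal leaves undone. To close the gap, either prove the representation directly from P1--P12 along the lines sketched above, or drop the reduction on the pre-rough side altogether and verify (w1)--(w4) and the inversion identities by equational reasoning from the pre-rough axioms themselves.
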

The connection about $\mathbf{L^W_3}$ and three-valued \L ukasiewicz logic (introduced by \L ukasiewicz in \cite{triv} and axiomatized by Wajsberg in \cite{waj}) \L$_3$ was initially proved by Wajsberg in \cite{waj}, in which he showed a completeness theorem. For this reason \Cref{teo:l3eq} connects three valued \L ukasiewicz logic and rough sets. To conclude, we list the axioms of \L$_3$: its formulas are defined in the usual way from a set of denumerable variables and the connectives $\neg,\to$.
\begin{align}
\tag	{\L1}&\varphi\to(\psi\to \varphi)\\
\tag	{\L2}&(\varphi\to \psi)\to ((\psi\to \chi)\to(\varphi\to \chi))\\
\tag	{\L3}&((\varphi\to \neg \varphi)\to \varphi)\to \varphi\\
\tag	{\L4}&(\neg \varphi\to \neg \psi)\to (\psi\to \varphi)
\end{align}
\subsection{Many-valued logics: syntax}\label{sec:synt}
Monoidal t-norm based logic (MTL) was introduced in \cite{mtl}: it is based over connectives $\&, \land, \to, \bot$ (the first three are binary, whilst the last one is $0$-ary). The notion of formula is defined inductively starting from the fact that all propositional variables (we will denote their set with $VAR$) and $\bot$ are formulas. The set of all formulas will be called $FORM$.

\noindent Useful derived connectives are the following
\begin{align}
\tag{negation}\neg\varphi\df& \varphi\to\bot\\
\tag{disjunction}\varphi\vee\psi\df& ((\varphi\to\psi)\to\psi)\land((\psi\to\varphi)\to\varphi)\\
\tag{strong disjunction}\varphi\veebar\psi\df&\neg(\neg\varphi\&\neg\psi).
\end{align}
For the reader's convenience we list the axioms of MTL
\begin{align}
\tag{A1}&(\varphi \rightarrow \psi)\rightarrow ((\psi\rightarrow \chi)\rightarrow(\varphi\rightarrow \chi))\\
\tag{A2}&(\varphi\&\psi)\rightarrow \varphi\\
\tag{A3}&(\varphi\&\psi)\rightarrow(\psi\&\varphi)\\
\tag{A4}&(\varphi\land\psi)\rightarrow \varphi\\
\tag{A5}&(\varphi\land\psi)\rightarrow(\psi\land\varphi)\\
\tag{A6}&(\varphi\&(\varphi\rightarrow \psi))\rightarrow (\psi\land\varphi)\\
\tag{A7a}&(\varphi\rightarrow(\psi\rightarrow\chi))\rightarrow((\varphi\&\psi)\rightarrow \chi)\\
\tag{A7b}&((\varphi\&\psi)\rightarrow \chi)\rightarrow(\varphi\rightarrow(\psi\rightarrow\chi))\\
\tag{A8}&((\varphi\rightarrow\psi)\rightarrow\chi)\rightarrow(((\psi\rightarrow\varphi)\rightarrow\chi)\rightarrow\chi)\\
\tag{A9}&\bot\rightarrow\varphi
\end{align}
As inference rule we have modus ponens:
\begin{equation}
\tag{MP}\frac{\varphi\quad \varphi\rightarrow\psi}{\psi}
\end{equation}
Nilpotent Minimum Logic (NM), introduced in \cite{mtl} is obtained from MTL by adding the following axioms:
\begin{align}
\tag{involution}&\neg\neg\varphi\to\varphi\\
\tag{WNM}&\neg(\varphi\&\psi)\vee((\varphi\land\psi)\to(\varphi\&\psi))
\end{align}
The previously mentioned \L ukasiewicz three-valued logic \L$_3$ can also be axiomatized as MTL plus (see \cite{haj,mtl})
\begin{align}
\tag{involution}&\neg\neg\varphi\to\varphi\\
\tag	{div}&(\varphi\land(\varphi\rightarrow \psi))\rightarrow (\psi\&\varphi)\\
\tag{$c_3$}&\varphi^2\to\varphi^3
\end{align} 
The notions of theory, syntactic consequence, proof are defined as usual.
\subsection{Many-valued logics: semantics}\label{subsec:sem}
\bigskip
An MTL algebra is an algebra $\lag A,*,\Rightarrow,\sqcap,\sqcup,0,1\rog$ such that
\begin{enumerate}
\item $\lag A,\sqcap,\sqcup, 0,1\rog$ is a bounded lattice with bottom $0$ and top $1$.
\item $\lag A,*,1 \rog$ is a commutative monoid.
\item $\lag *,\Rightarrow \rog$ forms a \emph{residuated pair}: $z*x\leq y$ iff $z\leq x\Rightarrow y$ for all $x,y,z\in A$.
\item The following axiom hold, for all $x,y\in A$:
\begin{equation}
\tag{Prelinearity}(x\Rightarrow y)\sqcup(y\Rightarrow x)=1
\end{equation}
A totally ordered MTL-algebra is called MTL-chain.
\end{enumerate}
An MV-algebra is an MTL-algebra that satisfies the following equations:
\begin{align*}
\sim\sim x&=x\\
x\sqcap y&=x*(x\Rightarrow y)
\end{align*}
Where $\sim x$ indicates $x\Rightarrow 0$.

\bigskip
The connection among Wajsberg and MV-algebras is the following:
\begin{theorem}[\cite{frt,haj}]\label{teo:mveq}
MV-algebras are term equivalent to Wajsberg algebras.
\end{theorem}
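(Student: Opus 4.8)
The plan is to realise the term equivalence concretely: I will exhibit explicit defining terms in each direction and then check two things, namely that each translation sends the defining identities of one class to theorems of the other, and that composing the two translations returns, provably, each fundamental operation. Since term equivalence of varieties is exactly the existence of such mutually inverse interpretations, this suffices. First I would fix the two translations. Starting from an MV-algebra $\lag A,*,\Rightarrow,\sqcap,\sqcup,0,1\rog$ I define a Wajsberg structure on the same carrier by $x\to y\df x\Rightarrow y$ and $\neg x\df\,\sim x=(x\Rightarrow 0)$, retaining $1$. In the opposite direction, from a Wajsberg algebra $\lag A,\to,\neg,1\rog$ I set $x\Rightarrow y\df x\to y$, $0\df\neg 1$, $x*y\df\neg(x\to\neg y)$, $x\sqcup y\df(x\to y)\to y$ and $x\sqcap y\df\neg(\neg x\sqcup\neg y)$. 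All of these are genuine terms in the respective signatures.

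Second, for the passage from MV-algebras to Wajsberg algebras I would first derive the natural order $x\leq y\iff x\Rightarrow y=1$ from residuation, so that each Wajsberg identity of the form $t=1$ translates into the assertion that $t$ is the top element. The implicational laws $1\to x=x$ and the syllogism law then follow from residuation and the monotonicity it entails; the contraposition law $(\neg x\to\neg y)\to(y\to x)=1$ follows at once from involution ($\sim\sim x=x$) together with the identity $x\Rightarrow y=\,\sim y\Rightarrow\,\sim x$, which collapses it to $a\Rightarrow a=1$; and the symmetry law $(x\to y)\to y=(y\to x)\to x$ holds because in any MV-algebra the join is term-definable as $x\sqcup y=(x\Rightarrow y)\Rightarrow y$ and is therefore commutative.

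Third, and this is the direction I expect to be the main obstacle, I would verify that the structure built from a Wajsberg algebra is an MV-algebra. Here the entire residuated-lattice apparatus must be reconstructed out of $\to,\neg,1$ alone: I must show that $*$ is a commutative monoid operation with unit $1$, that $\lag *,\Rightarrow\rog$ satisfies the adjunction $z*x\leq y\iff z\leq x\Rightarrow y$, that $\sqcap,\sqcup$ determine a lattice order coinciding with $x\leq y\iff x\to y=1$, and finally that prelinearity and the two MV identities $\sim\sim x=x$ and $x\sqcap y=x*(x\Rightarrow y)$ hold. Associativity and commutativity of $*$, and above all the adjunction, are the delicate points, since they encode the full content of the syllogism and symmetry axioms; it is precisely the symmetry law that makes $x\sqcup y$ a genuine supremum and thereby forces prelinearity.

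Finally, I would close the loop by checking that the two translations are mutually inverse on the fundamental operations: that $\,\sim(x\Rightarrow\,\sim y)$ recovers $x*y$ and that $(x\Rightarrow y)\Rightarrow y$ recovers $x\sqcup y$ in every MV-algebra, and symmetrically that $x\to\neg 1$ recovers $\neg x$ in every Wajsberg algebra. These are the standard identities relating strong conjunction, implication, negation and the lattice operations, and once they are established the term equivalence of the two varieties follows.
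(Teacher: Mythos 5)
The paper does not actually prove this theorem: it is imported as a known result from \cite{frt,haj}, so there is no internal argument to compare against, and your proposal is best measured against the classical proof in those references --- which it follows. Your ingredients are the standard, correct ones: the translations $x\to y\df x\Rightarrow y$, $\neg x\df x\Rightarrow 0$ in one direction, and $x\Rightarrow y\df x\to y$, $0\df\neg 1$, $x*y\df\neg(x\to\neg y)$, $x\sqcup y\df (x\to y)\to y$, $x\sqcap y\df\neg(\neg x\sqcup\neg y)$ in the other, are exactly those of the literature. Your verifications in the MV-to-Wajsberg direction are sound: $x\Rightarrow y=\,\sim y\Rightarrow\,\sim x$ does hold in every involutive MTL-algebra, which disposes of contraposition; and the symmetry law does reduce to the MV identity $(x\Rightarrow y)\Rightarrow y=x\sqcup y$ --- though it is worth flagging that this identity genuinely needs \emph{both} involution and divisibility (it fails, e.g., in NM-chains, which is precisely why the paper's derived disjunction for MTL is the more complicated meet of two such terms). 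The closing inverseness checks are also right; for instance $\sim(x\Rightarrow\,\sim y)=x*y$ follows from residuation, which gives $x\Rightarrow\,\sim y=\,\sim(x*y)$, together with involution.

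The caveat is that your third step is a list of obligations rather than a proof, and that list is essentially the entire content of the theorem. Showing that the four Wajsberg identities alone reconstruct a prelinear, integral, commutative residuated lattice --- associativity and commutativity of $*$, the adjunction $z*x\leq y$ iff $z\leq x\Rightarrow y$, the fact that $x\leq y$ iff $x\to y=1$ is a lattice order for which the defined $\sqcup,\sqcap$ are join and meet, plus prelinearity and divisibility --- requires a nontrivial chain of lemmas (exchange, transitivity and antisymmetry of the order, Chang-style distributivity arguments), and this is exactly what \cite{frt} is devoted to. So, as written, you have a correct plan along the standard route with all the right way-stations named, but to turn it into a proof you must either carry out those derivations or, as the paper itself does, delegate them to the cited sources.
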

\smallskip
\noindent An NM-algebra is an MTL-algebra that satisfies the following equations:
\begin{align*}
&\sim\sim x=x\\
&\sim(x*y)\sqcup((x\sqcap y)\Rightarrow(x*y))=1
\end{align*}
\smallskip
Moreover, as noted in \cite{gis}, in \emph{each} NM-chain it holds that:
\begin{align*}
x*y=&
\begin{cases}
0&\text{if }x\leq n(y)\\
\min(x,y)&\text{Otherwise.}
\end{cases}\\
x\Rightarrow y=&
\begin{cases}
1&\text{if }x\leq y\\
\max(n(x),y)&\text{Otherwise.}
\end{cases}
\end{align*}
Where $n$ is a strong negation function, i.e. $n:A\to A$ is an order-reversing mapping ($x\leq y$ implies $n(x)\geq n(y)$) such that $n(0)=1$ and $n(n(x))=x$, for each $x\in A$. Observe that $n(x)=x\Rightarrow 0$, for each $x\in A$.

If we define $x\oplus y\df\sim(\sim x*\sim y)$ (this is the algebraic counterpart of the connective $\veebar$), then an easy check shows that
\begin{equation*}
x\oplus y=
\begin{cases}
1&\text{if }n(x)\leq y\\
\max(x,y)&\text{Otherwise.}
\end{cases}
\end{equation*}
A negation fixpoint is an element $x\in A$ such that $n(x)=x$: note that if exists then it must be unique (otherwise $n$ fails to be order-reversing).
A positive element is an $x\in A$ such that $x>n(x)$; the definition of negative element is the dual. Given an NM-chain with support $A$, the set of positive (negative) elements will be denoted by $A^+$ ($A^-$).
\begin{ex}\label{ex:l3}
Consider the MTL-algebra	$\mathbf{L_3}=\lag\left\{0,\frac{1}{2},1\right\},\min,\max,*\rightarrow,0,1\rog$, where $x*y=\max(0,x+y-1)$ and $x\Rightarrow y=\min(1,1-x+y)$, for every $x,y\in \left\{0,\frac{1}{2},1\right\}$. A direct inspection shows that it is an MV-algebra and also an NM-algebra: in particular in can be shown that every three elements MV-algebra (NM-algebra) is isomorphic to it.

\noindent Note also that, in the light of \Cref{teo:mveq}, $\mathbf{L_3}$ is term-equivalent to the three element Wajsberg $\mathbf{L^w_3}$.
\end{ex}
\begin{definition}	\label{sem:as}
Let $\mathcal{A}$ be an NM-algebra.
Each map $e\colon \ VAR\to A$ extends uniquely to an $\mathcal{A}$-{\em assignment} $v_e:\ FORM\to A$, by
the following inductive prescriptions:
\begin{itemize}
\item $v_e(\bot)=0$
\item $v_e(\varphi\to\psi)=v_e(\varphi)\Rightarrow v_e(\psi)$
\item $v_e(\varphi\&\psi)=v_e(\varphi)* v_e(\psi)$
\item $v_e(\varphi\land\psi)=v_e(\varphi)\sqcap v_e(\psi)$
\end{itemize}
\end{definition}
A formula $\varphi$ is {\em consequence} of a theory (i.e. set of formulas) $\Gamma$ in an NM-algebra $\mathcal{A}$, in symbols, $\Gamma\models_\mathcal{A}\varphi$,
if for each $\mathcal{A}$-assignment $v$, $v(\psi)=1$ for all $\psi\in \Gamma$ implies that $v(\varphi)=1$.
\subsection{Completeness}
\noindent Let $\mathcal{A}$ be an NM-chain. We say that NM is strongly complete (respectively: finitely strongly complete, complete) with respect to $\mathcal{A}$
if for every theory $\Gamma$ (respectively, for every finite theory $\Gamma$ of formulas, for
$\Gamma=\emptyset$) and for every formula $\varphi$ we have
\begin{equation*}
\Gamma\vdash_\text{NM}\varphi\quad \text{iff}\quad \Gamma\models_\mathcal{A}\varphi
\end{equation*}
\begin{theorem}[\cite{mtl,gis}]
Let $\mathcal{A}$ be an infinite NM-chain with negation fixpoint. Then NM is complete w.r.t. $\mathcal{A}$.
\end{theorem}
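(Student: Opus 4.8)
The plan is to prove the two directions of the biconditional separately. The soundness direction, that $\vdash_{\mathrm{NM}}\varphi$ implies $\models_{\mathcal{A}}\varphi$, is immediate: $\mathcal{A}$ is by hypothesis an NM-algebra, so it validates every axiom of NM, and modus ponens preserves the value $1$; an easy induction on the length of a derivation then shows that every theorem evaluates to $1$ under every $\mathcal{A}$-assignment. The content lies in the converse, $\models_{\mathcal{A}}\varphi\Rightarrow\vdash_{\mathrm{NM}}\varphi$, which I would establish contrapositively. Here I would lean on two facts available in the references cited in the statement: first, that NM, being an axiomatic extension of MTL, is complete with respect to the class of all NM-chains (the standard subdirect-representation argument of \cite{mtl}, where prelinearity forces each NM-algebra to be a subdirect product of NM-chains); and second, the finite model property, to which I turn next.

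The key structural observation is that in any NM-chain the operations $*$ and $\Rightarrow$ displayed in \Cref{subsec:sem} depend only on the linear order and on the comparison of each argument with the negation fixpoint (equivalently, on whether an element is positive, negative, or the fixpoint). Hence the subalgebra generated by the finitely many values $v(p_1),\dots,v(p_k)$ assigned to the variables occurring in $\varphi$ is finite, and $v(\varphi)$ is computed identically inside this finite subchain. This yields the finite model property: if $\not\vdash_{\mathrm{NM}}\varphi$, then by chain-completeness $\varphi$ fails in some NM-chain, and therefore already in some finite NM-subchain $\mathcal{B}$ under a suitable assignment. I would also record the elementary classification that a finite NM-chain is determined up to isomorphism by its cardinality: its strong negation is the unique order-reversing involution of the underlying finite chain, and a fixpoint exists exactly when the cardinality is odd.

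It remains to transport the finite counterexample into $\mathcal{A}$, which is where the hypotheses that $\mathcal{A}$ is infinite and has a negation fixpoint $f$ enter. Writing $A=A^{-}\cup\{f\}\cup A^{+}$, the map $n$ restricts to an order-isomorphism between $A^{-}$ and $A^{+}$, so infinitude of $A$ forces both $A^{+}$ and $A^{-}$ to be infinite. Given a finite NM-chain $\mathcal{B}$, I would realise it as a subalgebra of $\mathcal{A}$ by choosing a sufficiently large $n$-closed finite subset of $A$ containing $0,1$: for odd cardinality I include $f$ together with enough elements of $A^{+}\setminus\{1\}$ and their $n$-images, and for even cardinality I take only elements of $A^{+}\setminus\{1\}$ and their images while omitting $f$. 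One checks that the operations never produce $f$ out of non-fixpoint elements, so the latter set is genuinely closed. Since a subalgebra inclusion $\mathcal{B}\hookrightarrow\mathcal{A}$ preserves all operations and the constant $1$, the value of $\varphi$ under an assignment into $\mathcal{B}$ is the same whether computed in $\mathcal{B}$ or in $\mathcal{A}$; thus the assignment witnessing $v(\varphi)\neq 1$ composes to an $\mathcal{A}$-assignment with the same property, giving $\not\models_{\mathcal{A}}\varphi$.

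I expect the main obstacle to be making the finite model property precise rather than the embedding step. The order-pattern heuristic sketched above must be turned into a clean proof that finitely generated NM-subchains are finite and that formula values are preserved under subchain inclusion; alternatively one may quote the finite model property directly from Gispert's analysis of the subvariety lattice of NM in \cite{gis}. The only other point needing care is the bookkeeping that $\mathcal{A}$ contains finite subchains of both parities and of arbitrarily large size, so that it genuinely realises every finite NM-chain. Once this is in place, validity in $\mathcal{A}$ coincides with validity in all finite NM-chains, which by the finite model property is precisely NM-provability.
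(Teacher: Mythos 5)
Your proof is correct, and it is worth situating it against the paper: the paper itself does not prove this statement (it is quoted from \cite{mtl,gis}), but it proves the stronger \Cref{teo:fcomp} (finite strong completeness) immediately afterwards, and your argument shares its two essential ingredients: local finiteness of NM-chains (any $n$-closed finite subset containing $0,1$ is already closed under $*$ and $\Rightarrow$, so finitely generated subchains are finite) and the fact that every finite NM-chain sits inside $\mathcal{A}$ as a subalgebra, which is exactly where infinitude and the presence of a fixpoint are needed. The difference is in how the reduction is organized. The paper invokes the criterion of \cite[Theorem 3.8]{dist} --- finite strong completeness w.r.t.\ $\mathcal{A}$ is equivalent to partial embeddability of every NM-chain into $\mathcal{A}$ --- and then writes down an explicit embedding of a finite subalgebra ($0\mapsto 0$, $1\mapsto 1$, fixpoint to fixpoint, negatives order-preservingly into $A^-$, positives via $\phi(x)=n_\mathcal{A}(\phi(n_\mathcal{X}(x)))$). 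You instead assemble the transfer by hand: subdirect representation into chains, the finite model property via local finiteness, and the classification of finite NM-chains up to isomorphism by cardinality (with a fixpoint exactly in the odd case), so that realizing $n$-closed finite subsets of $\mathcal{A}$ of both parities and arbitrary size suffices; all of these steps check out, including the observation that the operations never produce the fixpoint from non-fixpoint elements, so omitting $f$ still gives a subalgebra. Your route is more self-contained (it does not need \cite{dist}) but as stated yields only weak completeness, whereas the paper's criterion extracts finite strong completeness from essentially the same embedding work; one cosmetic slip is that $n$ restricts to an order-\emph{reversing} bijection between $A^-$ and $A^+$, not an order-isomorphism, though this does not affect your counting argument.
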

\noindent This result can be improved:
\begin{theorem}\label{teo:fcomp}
Let $\mathcal{A}$ be an infinite NM-chain with negation fixpoint. Then NM is finitely strongly complete w.r.t. $\mathcal{A}$.
\end{theorem}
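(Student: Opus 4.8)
The plan is to prove the two halves of the biconditional separately: soundness ($\Gamma\vdash_\text{NM}\varphi\Rightarrow\Gamma\models_\mathcal{A}\varphi$) is routine, since $\mathcal{A}$ is an NM-algebra and modus ponens is validated, so the work lies in the converse direction $\Gamma\models_\mathcal{A}\varphi\Rightarrow\Gamma\vdash_\text{NM}\varphi$ for finite $\Gamma$. I would argue by contraposition: assuming $\Gamma\not\vdash_\text{NM}\varphi$, I would build an $\mathcal{A}$-assignment $v$ with $v(\gamma)=1$ for every $\gamma\in\Gamma$ and $v(\varphi)\neq 1$, which witnesses $\Gamma\not\models_\mathcal{A}\varphi$.

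First, since NM is an axiomatic extension of MTL it is (finitely) strongly complete with respect to the class of \emph{all} NM-chains; hence $\Gamma\not\vdash_\text{NM}\varphi$ yields an NM-chain $\mathcal{B}$ and an assignment $u$ with $u(\gamma)=1$ for all $\gamma\in\Gamma$ and $u(\varphi)<1$. Let $p_1,\dots,p_m$ be the variables occurring in $\Gamma\cup\{\varphi\}$ and let $\mathcal{B}'$ be the subalgebra of $\mathcal{B}$ generated by $u(p_1),\dots,u(p_m)$. Invoking the well-known local finiteness of the variety of NM-algebras (the finitely generated free NM-algebras are finite; see the combinatorial analyses cited in the introduction), $\mathcal{B}'$ is a \emph{finite} NM-subchain of $\mathcal{B}$, and $u$ already takes all its values on subformulas of $\Gamma\cup\{\varphi\}$ inside $\mathcal{B}'$.

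The heart of the argument is then the following embedding lemma: every finite NM-chain embeds into $\mathcal{A}$. Here I would exploit that in any NM-chain the monoidal and residual operations are completely determined by the underlying order and by the strong negation $n$, via the explicit formulas recalled in \Cref{subsec:sem}. Consequently any injection that preserves the order, fixes $0$ and $1$, and commutes with $n$ is automatically an NM-embedding, since the two case splits defining $*$ and $\Rightarrow$ transfer verbatim along such a map. To construct an injection $\iota\colon\mathcal{B}'\to\mathcal{A}$ I would send the top and bottom of $\mathcal{B}'$ to $1$ and $0$, send the fixpoint of $\mathcal{B}'$ (if it has one) to the fixpoint of $\mathcal{A}$, map the positive elements of $\mathcal{B}'$ order-preservingly into $A^+$, and extend to the negative elements by $\iota(n(x))=n(\iota(x))$. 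This is possible precisely because $A^+$ is infinite: as $n$ restricts to an order-reversing bijection between $A^+$ and $A^-$ and $A$ is the disjoint union of $A^-$, the fixpoint, and $A^+$, infiniteness of $\mathcal{A}$ forces $A^+$ to be infinite, so it contains a finite order-isomorphic copy of the positive part of $\mathcal{B}'$ of the required size together with its maximum $1$.

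Composing, $v\df\iota\circ u$ is an $\mathcal{A}$-assignment with $v(\gamma)=\iota(1)=1$ for all $\gamma\in\Gamma$ and $v(\varphi)=\iota(u(\varphi))\neq 1$ (because $\iota$ is injective and $\iota(1)=1$), which completes the contrapositive. I expect the main obstacle to be the embedding lemma, and in particular pinning down where the two structural hypotheses are used: the negation fixpoint of $\mathcal{A}$ is exactly what lets odd finite chains such as $\mathbf{L_3}$ (which carries its own fixpoint) embed, while the infiniteness of $\mathcal{A}$ guarantees enough room for arbitrarily large finite chains. Dropping either hypothesis breaks the lemma, and with it finite strong completeness.
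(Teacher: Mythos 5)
Your proposal is correct, and its mathematical core coincides with the paper's: both arguments hinge on the same embedding lemma, namely that any finite subalgebra of an NM-chain embeds into $\mathcal{A}$ via a map that fixes $0$ and $1$, sends fixpoint to fixpoint, maps one of the two ``halves'' order-preservingly (you do the positives, the paper does the negatives --- a mirror image of the same construction), and extends to the other half by commuting with the negation; and both justify that this suffices by local finiteness of NM and by the fact that the operations of an NM-chain are definable from the order and the strong negation. Where you genuinely diverge is in the outer reduction. The paper invokes a known characterization (\cite{dist}, Theorem 3.8): finite strong completeness w.r.t.\ $\mathcal{A}$ is \emph{equivalent} to every NM-chain being partially embeddable into $\mathcal{A}$, so after that citation only the embedding lemma remains to be proved. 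You instead reprove this reduction by hand: contraposition, strong completeness of NM w.r.t.\ the class of \emph{all} NM-chains (a standard fact for axiomatic extensions of MTL), passage to the finite subalgebra generated by the values of the finitely many variables, and composition of the embedding with the falsifying assignment. Your route is more self-contained and makes visible exactly why finiteness of $\Gamma$ is needed (only finitely many generators, hence a finite counter-chain), while the paper's is shorter and, via the cited equivalence, implicitly gives the converse direction too (partial embeddability is \emph{necessary}, not just sufficient). A further small merit of your write-up is that you make explicit two points the paper leaves tacit: that an order-preserving injection fixing $0,1$ and commuting with $n$ is automatically an NM-embedding, and that infiniteness of $\mathcal{A}$ together with the involutivity of $n$ forces $A^+$ (equivalently $A^-$) to be infinite, which is what provides room for arbitrarily large finite chains.
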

\begin{proof}
In \cite[Theorem 3.8]{dist} is shown that this property is equivalent to ask that every NM-chain $\mathcal{B}$ is partially embeddedable into $\mathcal{A}$. We will show that each finite subalgebra of an NM-chain embeds in $\mathcal{A}$: this is enough, since every finitely-generated NM-chain is finite\footnote{This is not difficult to check: if we start with a generating set with $n$ elements, then its closure has cardinality at most $2n+2$, since we can add $0,1$ and the negations of the elements. In general, the variety of NM-algebras is locally finite.}.

Take an NM-chain $\mathcal{B}$, and let $\mathcal{X}$ be a finite subalgebra of $\mathcal{B}$ with universe $X$. Construct a map $\phi:\mathcal{X}\to\mathcal{A}$ such that 
\begin{itemize}
\item $\phi(0)=0, \phi(1)=1$.
\item The elements of $X^-$ are mapped in $A^-$ preserving the order, that is for every $x,x'\in X^-$ such that $x<x'$ it holds that $\phi(x),\phi(x')\in A^-$ with $\phi(x)<\phi(x')$. 
\item If $\mathcal{X}$ has a negation fixpoint $f_\mathcal{X}$, then $\phi(f_\mathcal{X})=f_\mathcal{A}$ ($f_\mathcal{A}$ being the negation fixpoint of $\mathcal{A}$).
\item For every $x\in X^+$ we set $\phi(x)=n_\mathcal{A}(\phi(n_\mathcal{X}(x)))$. 
\end{itemize}
Direct inspection shows that $\phi$ is an embedding: this concludes the proof.
\end{proof}

\noindent Here we list some examples of infinite NM-chains with negation fixpoint:

\smallskip
\begin{itemize}
\item $NM_\infty=\lag \{a_n\}_{n\in \mathbb{Z}}\cup\{0,1\},*,\Rightarrow,\min,\max,0,1\rog$, where
\begin{itemize}
\item $0<a_n<1$ for all $n\in\mathbb{Z}$; for all $n,m\in\mathbb{Z}$, if $n<m$, then $a_n<a_m$.
\item $n(0)=1, n(1)=0$ and, for all $m\in\mathbb{Z}$, $n(a_m)=a_{0-m}$.
\end{itemize}
\item $[0,1]_{NM}=\lag [0,1],*,\Rightarrow,\min,\max,0,1\rog$, where
\begin{itemize}
\item the order is given by $\leq_\mathbb{R}$.
\item $n(x)=1-x$, for each $x\in [0,1]$.
\end{itemize}
\item $[0,1]^\mathbb{Q}_{NM}=\lag [0,1]\cap\mathbb{Q},*,\Rightarrow,\min,\max,0,1\rog$, where
\begin{itemize}
\item the order is given by $\leq_\mathbb{Q}$.
\item $n(x)=1-x$, for each $x\in [0,1]\cap\mathbb{Q}$.
\end{itemize}
\end{itemize}
Concerning $[0,1]_{NM}$ and $[0,1]^\mathbb{Q}_{NM}$, we have a that:
\begin{theorem}[\cite{mtl,dist}]\label{theorem:nmstd}
NM enjoys the strong completeness with respect to $\mathcal{A}$, with $\mathcal{A}\in \left\{[0,1]_{NM}, [0,1]^\mathbb{Q}_{NM}\right\}$.
\end{theorem}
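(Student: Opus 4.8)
The plan is to establish the two directions separately. Soundness, that $\Gamma\vdash_{\mathrm{NM}}\varphi$ implies $\Gamma\models_{\mathcal A}\varphi$, is routine: one verifies that each NM-axiom takes value $1$ under every $\mathcal A$-assignment and that modus ponens preserves the value $1$, using the residuation property of $\mathcal A$. The substance lies in the converse, for which I would reuse the embedding philosophy of \Cref{teo:fcomp}, but now with genuine rather than merely partial embeddings. Concretely, I would appeal to the infinitary analogue of the criterion of \cite{dist}: NM is strongly complete with respect to a chain $\mathcal A$ as soon as \emph{every countable NM-chain embeds into $\mathcal A$}. The whole theorem then reduces to exhibiting such embeddings into $[0,1]_{\mathrm{NM}}$ and into $[0,1]^{\mathbb Q}_{\mathrm{NM}}$.

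To see why this reduction is sound, suppose $\Gamma\not\vdash_{\mathrm{NM}}\varphi$ and consider the Lindenbaum--Tarski algebra $\mathcal B=FORM/{\equiv_\Gamma}$ with the canonical assignment sending each formula to its class: then every element of $\Gamma$ goes to $1$ while $[\varphi]\neq 1$. As the language is denumerable, $\mathcal B$ is countable. Since NM is semilinear (prelinearity is among its axioms), $\mathcal B$ is a subdirect product of NM-chains, each of which is a homomorphic image of $\mathcal B$ and hence countable; choosing a factor in which the image of $[\varphi]$ is still distinct from $1$ produces a countable NM-chain $\mathcal C$ and an assignment witnessing $\Gamma\not\models_{\mathcal C}\varphi$. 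Composing with an embedding $\mathcal C\hookrightarrow\mathcal A$ yields $\Gamma\not\models_{\mathcal A}\varphi$, as required.

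The core of the argument is the embedding. The crucial observation, from the characterization recalled from \cite{gis}, is that the operations $*$ and $\Rightarrow$ of any NM-chain are completely determined by its order and its strong negation $n$; hence any map between NM-chains that preserves $0,1$, the order and the negation is automatically an NM-homomorphism. So it suffices to build an order-embedding $\phi\colon C\to[0,1]$ with $\phi(0)=0$, $\phi(1)=1$ and $\phi(n_{\mathcal C}(x))=1-\phi(x)$. I would first check that in any NM-chain every negative element lies strictly below every positive element (and below the fixpoint, if any), then enumerate the negative part $C^-$ and invoke Cantor's universality of $\mathbb Q$ to order-embed $C^-$ into $\mathbb Q\cap[0,\tfrac12)$, sending the least element $0$ to $0$. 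I would then \emph{define} $\phi$ on the positive part by $\phi(n_{\mathcal C}(x))=1-\phi(x)$, landing it in $\mathbb Q\cap(\tfrac12,1]$, and set $\phi(f_{\mathcal C})=\tfrac12$ whenever a negation fixpoint $f_{\mathcal C}$ exists. The construction is symmetric about $\tfrac12$, so $\phi$ is order-preserving and commutes with negation, hence is an NM-embedding. Because its image consists of rationals, the very same $\phi$ handles \emph{both} $[0,1]_{\mathrm{NM}}$ and $[0,1]^{\mathbb Q}_{\mathrm{NM}}$, settling the two cases simultaneously.

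The main obstacle is precisely this last construction: one must interleave the forth argument giving Cantor universality with the requirement that the negation and its possible fixpoint be respected, so that the two halves of $C$ are placed symmetrically about $\tfrac12$ without collisions. Countability is essential here — a general NM-chain need not embed into the separable order $[0,1]$ — which is exactly why the passage through the countable Lindenbaum algebra cannot be bypassed, and why the embeddability criterion governing strong completeness is phrased for \emph{countable} chains rather than for all chains.
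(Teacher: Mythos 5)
There is nothing in the paper to compare your argument against: \Cref{theorem:nmstd} is stated as a known result, with the citation \cite{mtl,dist} standing in place of a proof. Your reconstruction is, however, correct, and it is essentially the standard argument from the cited source \cite{dist}. The reduction you use --- strong completeness w.r.t.\ $\mathcal{A}$ follows once every countable NM-chain embeds into $\mathcal{A}$ --- is exactly \cite[Theorem 3.5]{dist}, the same criterion the paper itself invokes in \Cref{rem:comp}, and your re-derivation of its sufficiency (countable Lindenbaum--Tarski algebra, subdirect decomposition into countable chains via semilinearity, composition with the embedding) is sound. The embedding construction also checks out: by the description of NM-chain operations recalled in \Cref{subsec:sem} from \cite{gis}, an order-embedding fixing $0$ and $1$ and commuting with the strong negation is automatically an NM-embedding; trichotomy (negative, fixpoint, positive) holds in any chain, with negatives strictly below positives; Cantor universality places the countable negative cone inside $\mathbb{Q}\cap[0,\frac{1}{2})$ with $0\mapsto 0$; and reflection about $\frac{1}{2}$ handles the positive cone and the fixpoint without collisions. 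Since the image is rational, one map indeed settles both $[0,1]_{NM}$ and $[0,1]^{\mathbb{Q}}_{NM}$ at once.

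The instructive contrast is with the proof the paper does carry out, that of \Cref{teo:fcomp}: there only \emph{partial} embeddings (of finite subalgebras) are constructed, which by \cite[Theorem 3.8]{dist} yields only \emph{finite} strong completeness, and that weaker property holds for every infinite NM-chain with fixpoint, including $NM_\infty$. Your argument requires genuine embeddings of all countable chains --- a strictly stronger condition, which fails for $NM_\infty$ (\Cref{rem:comp}) for the same reason $\mathbb{Q}$ does not order-embed into $\mathbb{Z}$ --- and it is precisely the density of $\mathbb{Q}\cap(0,\frac{1}{2})$ that your Cantor step exploits. So your proof not only fills in the quoted result but also isolates exactly where $[0,1]_{NM}$ and $[0,1]^{\mathbb{Q}}_{NM}$ outstrip $NM_\infty$.
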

\begin{remark}\label{rem:comp}
Note that NM does not enjoy the strong completeness w.r.t. $NM_\infty$. Indeed, due to \cite[Theorem 3.5]{dist} this is equivalent to ask that every countable NM-chain can be embedded into $NM_\infty$: however, this does not hold. For example, $[0,1]^\mathbb{Q}_{NM}\not\hookrightarrow NM_\infty$, for the same reason for which rational numbers cannot be embedded into integer numbers, by preserving the order.
\end{remark}

\section{A temporal semantics for Nilpotent Minimum logic}
In this section we discuss a temporal semantics related to NM. 

In the approach that we present, the temporal flow is an arbitrary totally ordered infinite set $\lag T,\leq\rog$: the elements of $T$ are called \emph{instants}. In particular, the logic associated to the single instant (or world, by using a well-established terminology in modal logic) is based over three truth-values, $\left\{0,\frac{1}{2},1\right\}$, ordered in the way that $0<\frac{1}{2}<1$.

Over these three values we can define the semantics associated to a negation and an implication operations:
\begin{table}[h]
\begin{center}
\begin{tabular}{|c|c|}
\hline
 $\neg_3$   &  \\
\hline
$0$  & $1$   \\
\hline
$\frac{1}{2}$  & $\frac{1}{2}$  \\
\hline
$1$  & $0$  \\
\hline
\end{tabular}
\hspace{2 cm}
\begin{tabular}{|c|c|c|c|}
\hline
 $\rightarrow_3$   & $0$ & $\frac{1}{2}$ & $1$ \\
\hline
$0$  & $1$ & $1$ & $1$  \\
\hline
$\frac{1}{2}$  & $\frac{1}{2}$ & $1$ & $1$ \\
\hline
$1$  & $0$ & $\frac{1}{2}$ & $1$ \\
\hline
\end{tabular}
\end{center}
\caption{Trivalent operations}
\label{tab:1}
\end{table}
in particular, note that the negation is involutive, and hence its behavior is substantially determined by the order of the truth values previously specified. Concerning the implication, it is a generalization of classical implication: we have that $a\rightarrow_3 b=1$ whenever $a\leq b$, that $1\rightarrow_3 b=b$, and that $a\rightarrow_3 0$ has the same behavior of $\neg_3 a$ (that is, $\neg_3$ is obtained by pseudocomplementation from $\rightarrow_3$). 
\begin{remark}	
It can be noted that these operations are the same of the algebra $\mathbf{L_3^w}$, that - as we have seen in \Cref{sec:rough} - gives an algebraic semantics for \L ukasiewicz three valued logic \L$_3$.
\end{remark}
In the proposed semantics a temporal assignment (over a temporal flow $\lag T,\leq\rog$) is a function $v:\ FORM\times T\to \left\{0,\frac{1}{2},1\right\}$. However, not arbitrary assignments are admitted: in our semantics we restrict to three typologies of temporal assignments, as indicated below:
\begin{condition}\label{cond:1}
We restrict to the following types of temporal assignments $v:\ FORM\times T\to \left\{0,\frac{1}{2},1\right\}$, for every $\varphi\in FORM$:
\begin{enumerate}
\item $v(\varphi,\cdot)$ is constant, to $0$, $\frac{1}{2}$ or $1$. 

In this case we say that, respectively $v(\varphi,\cdot)\approx 0$, $v(\varphi,\cdot)\approx \frac{1}{2}$, $v(\varphi,\cdot)\approx 1$. 
\item There is a $t\in T$, with $t\neq \min(T)$ (if $T$ has a minimum) such that
\begin{equation*}
v(\varphi,t')=0 \text{ for every }t'\geq t,\text{ and }v(\varphi,t'')=\frac{1}{2},\text{ for every }t''<t.
\end{equation*}
In this case we say that $v(\varphi,\cdot)\approx t_0$.
\item There is a $t\in T$, with $t\neq \min(T)$ (if $T$ has a minimum) such that
\begin{equation*}
v(\varphi,t')=1 \text{ for every }t'\geq t,\text{ and }v(\varphi,t'')=\frac{1}{2},\text{ for every }t''<t.
\end{equation*}
In this case we say that $v(\varphi,\cdot)\approx t_1$.
\end{enumerate}
\end{condition}
The following lemma guarantees that $\approx$ associates no more than a value to a temporal assignment (applied to a formula). 
\begin{lemma}\label{lem:cldisg}
The symbol $\approx$ introduced in \Cref{cond:1} is a partial map.
\end{lemma}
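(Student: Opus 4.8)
The plan is to unwind the definition of $\approx$ and to check that the five descriptive clauses of \Cref{cond:1}, read as predicates on the single-argument function $g\df v(\varphi,\cdot)\colon T\to\left\{0,\frac12,1\right\}$, are pairwise incompatible, and moreover that the parameter $t$ occurring in the last two clauses is uniquely determined by $g$. Since $\approx$ relates $g$ to a label drawn from $\left\{0,\frac12,1\right\}\cup\{t_0:t\in T\}\cup\{t_1:t\in T\}$, establishing these two facts shows that each $g$ receives at most one label, which is exactly the assertion that $\approx$ is a partial map.

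First I would classify each clause by the set of truth values that $g$ attains. The three constant clauses give, respectively, the ranges $\{0\}$, $\left\{\frac12\right\}$ and $\{1\}$; a clause of type $t_0$ forces the range into $\left\{0,\frac12\right\}$, and a clause of type $t_1$ forces it into $\left\{\frac12,1\right\}$. Because $T$ is infinite, hence nonempty, the three constant ranges are distinct, so no $g$ can satisfy two different constant clauses. The key point, verified next, is that a $t_0$- or $t_1$-clause actually produces a genuinely nonconstant $g$: this is precisely where the restriction $t\neq\min(T)$ is used.

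To see this I would split into two cases. If $T$ has a minimum $m$, then $t\neq m$ forces $m<t$, so $m$ lies strictly below $t$ and hence $g(m)=\frac12$, while $g(t)\in\{0,1\}$; thus $g$ attains $\frac12$ together with $0$ (type $t_0$) or with $1$ (type $t_1$). If $T$ has no minimum, then $t$ cannot itself be least, so some $s<t$ exists and the same conclusion follows. In either case a $t_0$-function attains both $0$ and $\frac12$ and a $t_1$-function attains both $\frac12$ and $1$; comparing attained values, such a $g$ satisfies none of the constant clauses, and the two threshold families are disjoint from each other since one attains $0$ and never $1$ while the other attains $1$ and never $0$.

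It remains to pin down the parameter within a single family. Suppose $g\approx t_0$ and also $g\approx s_0$ with, say, $t<s$. Evaluating at the index $t$, the first description gives $g(t)=0$ (as $t\geq t$) whereas the second gives $g(t)=\frac12$ (as $t<s$), a contradiction; hence $t=s$. The identical argument, reading $1$ in place of $0$, settles the type $t_1$ case. Combining the range analysis with this uniqueness shows that the label assigned by $\approx$ is unique whenever it exists, so $\approx$ is a partial map. The only genuinely delicate point is the case split on the existence of $\min(T)$, which guarantees that the threshold clauses never collapse into the constant ones; everything else is routine bookkeeping on attained truth values.
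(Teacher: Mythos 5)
Your proof is correct: it carries out precisely the verification that the paper dismisses as ``an easy check,'' namely that the five clause types of \Cref{cond:1} are pairwise incompatible (via the attained truth values, with the hypothesis $t\neq\min(T)$ correctly identified as what separates the threshold clauses from the constant ones) and that the threshold parameter $t$ is uniquely determined. There is no divergence in approach, since the paper gives no explicit argument; yours is a sound and complete elaboration of the intended check.
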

\begin{proof}
An easy check.
\end{proof}
If we think at $0$ as ``false'', and at $1$ as ``true'', then $\frac{1}{2}$ can be understood as an intermediate state among them. Note that we have, essentially, five different types of temporal assignments: the characteristic that all of them have in common (by the only exception of the assignment constant to $\frac{1}{2}$) is that they assume a stable value  $0$ or $1$, after some period of time. The first three types are the constant ones, to $0$, $\frac{1}{2}$ or $1$: the meaning of the first and the last one is clear, one makes a formula always false, and the other always true. The assignment constant to $\frac{1}{2}$, instead, is more delicate: a possible interpretation, if we are interested only to the ``definitive'' truth or the falsity of a formula, is that this last one never reaches a well definite truth-value.

Finally, concerning the remaining two types of assignments, for both of them there is an instant starting from that the formula assumes a definite truth-value (true or false), and before that it is in an intermediate state.
 
Now we introduce the definition of temporal assignment (first on the variables, and then we will extend it over formulas):
\begin{definition}\label{def:var}
A temporal assignment over variables (associated to a temporal flow $\lag T,\leq\rog$) is a function $v:\ VAR\times T\to \left\{0,\frac{1}{2},1\right\}$ such that one of the following holds, for every $x\in VAR$:
\begin{itemize}
\item $v(x,\cdot)$ is constant.
\item There is a $t\in T$, with $t\neq \min(T)$ (if $T$ has a minimum) such that 
\begin{equation*}
v(\varphi,t')=0 \text{ for every }t'\geq t,\text{ and }v(\varphi,t'')=\frac{1}{2},\text{ for every }t''<t.
\end{equation*}
\item There is a $t\in T$, with $t\neq \min(T)$ (if $T$ has a minimum) such that 
\begin{equation*}
v(\varphi,t')=1 \text{ for every }t'\geq t,\text{ and }v(\varphi,t'')=\frac{1}{2},\text{ for every }t''<t.
\end{equation*}
\end{itemize}
\end{definition}
We now extend our notion of temporal assignments to the formulas of Nilpotent Minimum logic
\begin{remark}
We will consider only $\to,\neg$, as connectives. This is because, as pointed out in \cite{maxdf}, in Nilpotent Minimum logic the disjunction $\vee$ is definable from $\neg,\to$ as $\varphi\vee\psi\df(\varphi\to(\varphi\to\psi))\&((\varphi\to(\varphi\to\psi))\to(\psi\to(\psi\to\varphi)))$, where $\varphi\&\psi\df\neg(\varphi\to\neg\psi)$, and $\varphi\land\psi\df\neg(\neg\varphi\vee\neg\psi)$.
\end{remark}
In the semantics we want to describe we are essentially interested to the stable behavior of the assignments, that is when they assume a constant value, for all the future instants of time. 
\begin{definition}\label{def:form1}
Let $v$ be a temporal assignment over variables, associated to some temporal flow $T$. Its extension $v':\ FORM\times T\to \left\{0,\frac{1}{2},1\right\}$ to formulas is defined, inductively, in the following way, for every $\varphi\in FORM$, and $t\in T$:
\begin{equation*}
v'(\varphi,t)\df
\begin{cases}
v(x,t)&\text{if }\varphi=x\\
0&\text{if }\varphi=\bot\\
\neg_3v'(\psi,t)&\text{if }\varphi=\neg\psi\\
v'_d(\psi\to\chi,t)&\text{if }\varphi=\psi\to\chi.
\end{cases}
\end{equation*}
Where $\psi,\chi\in FORM$, $x\in VAR$ and
\begin{equation*}
v'_d(\psi\to\chi,t)\df
\begin{cases}
v'(\psi,t)\to_3 v'(\chi,t)&\text{if }v'(\psi,t)\to_3 v'(\chi,t)=v'(\psi,t')\to_3 v'(\chi,t'),\\
&\text{for every }t'\geq t\\
\frac{1}{2}&\text{otherwise.}
\end{cases}
\end{equation*}
\end{definition}
Essentially, we are applying the operations described in \Cref{tab:1} in a ``pointwise'' way, that is instant by instant. The function $v_d$ associates to an assignment $v$ its ``definitive behavior'': this function is necessary to restrict ourself on the assignments of \Cref{cond:1}, as the following proposition shows. 
\begin{lemma}\label{lem:def}
The temporal assignments of \Cref{def:form1} satisfy \Cref{cond:1}.
\end{lemma}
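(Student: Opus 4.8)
The plan is to prove the statement by induction on the structure of the formula $\varphi$, showing that the extended valuation $v'(\varphi,\cdot)$ always has one of the five shapes catalogued in \Cref{cond:1}, namely constant to $0$, $\frac{1}{2}$ or $1$, or of the two ``switching'' types $\approx t_0$ and $\approx t_1$. For the base cases this is immediate: if $\varphi=\bot$ then $v'(\varphi,\cdot)$ is constant to $0$, and if $\varphi=x$ is a variable then \Cref{def:var} already imposes exactly the admissible shapes of \Cref{cond:1}. For the negation step I would use that $\neg_3$ (see \Cref{tab:1}) fixes $\frac{1}{2}$ and interchanges $0$ and $1$; applied pointwise it therefore permutes the five admissible shapes, fixing $\approx\frac{1}{2}$, swapping $\approx 0$ with $\approx 1$, and swapping the switching types $t_0$ and $t_1$ while keeping the same switch instant. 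Hence $v'(\neg\psi,\cdot)$ is again of \Cref{cond:1} type whenever $v'(\psi,\cdot)$ is.

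The substantial case is implication. Using the induction hypothesis, I would first record that $v'(\psi,\cdot)$ and $v'(\chi,\cdot)$ are each \emph{eventually constant}: beyond its single switch instant (if any) each sits at a fixed value $\alpha,\beta\in\{0,\frac{1}{2},1\}$. Consequently the pointwise implication $g(t):=v'(\psi,t)\to_3 v'(\chi,t)$ is eventually constant, with eventual value $c:=\alpha\to_3\beta$. From the definition of $v'_d$ in \Cref{def:form1} one sees that $v'(\psi\to\chi,t)=c$ precisely when $g$ is constantly $c$ on the entire future $\{t'\geq t\}$, and equals $\frac{1}{2}$ otherwise. Writing $U$ for the set of instants with the former property, $U$ is upward closed and non-empty, and $v'(\psi\to\chi,\cdot)$ equals $c$ on $U$ and $\frac{1}{2}$ on its complement. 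If $c=\frac{1}{2}$ this is constant to $\frac{1}{2}$, hence of type $\approx\frac{1}{2}$; if $c\in\{0,1\}$ the result is constant (when $U=T$) or of switching type, provided $U$ has a least element that is not $\min(T)$.

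The heart of the argument, and the step I expect to be the main obstacle, is establishing that $U$ is either all of $T$ or a ray $[b,\infty)$ whose left endpoint $b$ is a genuine instant distinct from $\min(T)$; one must rule out a ``half-open'' lower boundary admitting no minimum, which would fit none of the shapes of \Cref{cond:1}. To handle this I would exploit that $g$ is piecewise constant on the at most three intervals cut out by the switch instants $s_\psi,s_\chi$ of $\psi$ and $\chi$ (treating a constant valuation as having no switch), together with the fact that, by \Cref{cond:1}, each switch instant is the \emph{minimum} of the up-set on which its valuation has stabilised and is required to differ from $\min(T)$. Thus the maximal up-set on which $g\equiv c$ is delimited from below by one of these switch instants, or is all of $T$; so $U=[b,\infty)$ with $b\in\{s_\psi,s_\chi\}$, or $U=T$.

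In the former case $b\neq\min(T)$ and $v'(\psi\to\chi,\cdot)$ is of type $t_0$ when $c=0$ and of type $t_1$ when $c=1$, while in the latter it is constant to $c$. Together with the observation of the previous paragraph for $c=\frac{1}{2}$, this exhausts all possibilities for the value $c=\alpha\to_3\beta$ and shows that $v'(\psi\to\chi,\cdot)$ always satisfies \Cref{cond:1}, closing the induction.
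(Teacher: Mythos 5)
Your proof is correct, and it takes a genuinely different route from the paper's in the only substantial case. The paper proves \Cref{lem:def} by the same induction, with identical treatment of the base cases and of negation, but for implication it performs an exhaustive enumeration of the combinations of types of the antecedent and the consequent (fourteen cases, e.g.\ ``if $v(\varphi,\cdot)\approx t_1$ and $v(\psi,\cdot)\approx t'_0$, then $v(\varphi\to\psi,\cdot)\approx \max(t,t')_0$''), computing the resulting type in each case. You instead give a uniform structural argument: the pointwise implication $g$ is constant on the at most three left-closed intervals cut out by the switch instants of the subformulas, so the stabilization set $U$ (the largest up-set contained in $g^{-1}(c)$, $c$ the eventual value) is either all of $T$ or a ray whose minimum is one of those switch instants, hence distinct from $\min(T)$; this is exactly the point a careless reader might miss (in a dense flow an arbitrary up-set need not have a minimum), and you isolate and settle it correctly. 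What each approach buys: yours is shorter, conceptual, and immune to enumeration slips, while the paper's case list produces the explicit resulting type for every pair of input types (including the precise switch instant, such as $\max(t,t')$), which is then tacitly reused in the later verification that $s^v(\varphi)=d(\varphi^v)$; from your argument that finer information would still have to be extracted case by case.
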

\begin{proof}
By induction over $\varphi$.

If $\varphi\in VAR\cup\left\{\bot\right\}$, then the result easily follows from \Cref{def:var,def:form1}.

Suppose that $\varphi\df\neg\psi$ and that the claim holds for $\psi$: since for every temporal assignment $v$ and instant $t$, $v(\varphi,t)=1-v(\psi,t)$, then by the inductive hypothesis and an easy check we have the result.

Finally, suppose that $\varphi\df\psi\to\chi$ and that the claim holds for $\psi,\chi$. Take a temporal assignment $v$. A direct inspection shows that:
\begin{itemize}
\item If $v(\psi,\cdot)\approx 0$, then $v(\varphi\to\psi,t)=v(\neg\varphi,t)$, for every $t$.
\item If $v(\psi,\cdot)\approx 1$ or $v(\varphi,\cdot)\approx 0$, then $v(\varphi\to\psi,\cdot)\approx 1$.
\item If $v(\varphi,\cdot)\approx 1$ then $v(\varphi\to\psi,\cdot)\approx v(\psi,\cdot)$.
\item If $v(\varphi,\cdot)\approx t_1$ and $v(\psi,\cdot)\approx t'_1$, with $t\geq t'$ then $v(\varphi\to\psi,\cdot)\approx 1$.
\item If $v(\varphi,\cdot)\approx t_0$ and $v(\psi,\cdot)\approx t'_0$, with $t\leq t'$ then $v(\varphi\to\psi,\cdot)\approx 1$.
\item If $v(\varphi,\cdot)\approx t_0$ and $v(\psi,\cdot)\approx t'_1$, then $v(\varphi\to\psi,\cdot)\approx 1$.
\item If $v(\varphi,\cdot)\approx t_1$ and $v(\psi,\cdot)\approx t'_1$, with $t<t'$ then $v(\varphi\to\psi,\cdot)\approx t'_1$.
\item If $v(\varphi,\cdot)\approx t_0$ and $v(\psi,\cdot)\approx t'_0$, with $t'<t$ then $v(\varphi\to\psi,\cdot)\approx t_1$.
\item If $v(\varphi,\cdot)\approx t_1$ and $v(\psi,\cdot)\approx t'_0$, then $v(\varphi\to\psi,\cdot)\approx \max(t,t')_0$.
\item If $v(\varphi,\cdot)\approx \frac{1}{2}$ and $v(\psi,\cdot)\approx t_1$, then $v(\varphi\to\psi,\cdot)\approx 1$.
\item If $v(\varphi,\cdot)\approx \frac{1}{2}$ and $v(\psi,\cdot)\approx t_0$, then $v(\varphi\to\psi,\cdot)\approx \frac{1}{2}$.
\item If $v(\varphi,\cdot)\approx t_0$ and $v(\psi,\cdot)\approx \frac{1}{2}$, then $v(\varphi\to\psi,\cdot)\approx 1$.
\item If $v(\varphi,\cdot)\approx \frac{1}{2}$ and $v(\psi,\cdot)\approx \frac{1}{2}$, then $v(\varphi\to\psi,\cdot)\approx 1$.
\item If $v(\varphi,\cdot)\approx t_1$ and $v(\psi,\cdot)\approx \frac{1}{2}$, then $v(\varphi\to\psi,\cdot)\approx \frac{1}{2}$.
\end{itemize}
This exhausts all the cases and concludes the proof.
\end{proof}
Note that, by analyzing the question from a different perspective, a temporal assignment is a function that associates to every formula a certain sequence (indexed by the instants of time) of truth-values:
\begin{definition}\label{def:bitseq}
Given a temporal assignment $v$ (over a temporal flow $\lag T,\leq\rog$), one can define a function $\cdot^v$ from the set of formulas into the set of sequences of $\left\{0,\frac{1}{2},1\right\}^T$ by
\begin{equation*}
\varphi^v\df v(\varphi,\cdot).
\end{equation*}
We set $\mathcal{T}_T=\left\{\varphi^v:\, \varphi\text{ is a formula and }v\text{ is a temporal assignment over }\lag T,\leq\rog\right\}$.
\end{definition}
As shown in \Cref{def:form1} , for every temporal assignment $v$ (over a temporal flow $T$) and formula $\varphi$, the sequence $\varphi^v$ has one of the three types of behavior described in \Cref{cond:1}. Since we are interested in the definitive behavior of a temporal assignment, we now define an operator that ``capture'' the behavior of an assignment.
\begin{definition}\label{def:defbeh}
Let $\varphi, v$ be a formula and a temporal assignment over a temporal flow $\lag T,\leq\rog$, and let $T'=T\cup \{-\infty\}$. The definitive behavior operator $d:\mathcal{T}_T\to T'\times\left\{0,\frac{1}{2},1\right\}$ is defined as follows:
\begin{itemize}
\item $d(\varphi^v)=\lag -\infty, 1\rog$ if $\varphi^v\approx 1$.
\item $d(\varphi^v)=\lag -\infty, 0\rog$ if $\varphi^v\approx 0$.
\item $d(\varphi^v)=\lag -\infty,\frac{1}{2}\rog$ if $\varphi^v\approx \frac{1}{2}$.
\item $d(\varphi^v)=\lag t,1\rog$ if $\varphi^v\approx t_1$.
\item $d(\varphi^v)=\lag t,0\rog$ if $\varphi^v\approx t_0$.
\end{itemize}
The fact that $d$ is a well defined map is assured by \Cref{lem:def}.
\end{definition}
\begin{remark}
The first component of the pairs $\lag t,i\rog$ indicates the instant of time in which the function $\varphi^v$ assumes the ``stable'' value: this last one ($0$, $\frac{1}{2}$ or $1$) is specified in the second component. This justify the fact that $\lag-\infty,i\rog$ indicates that the function assumes always the value $i$.
\end{remark}
\begin{definition}	
Let $T'=T\cup \{-\infty\}$. We define a total order relation $\leq_{T''}$, over $T''=T'\times \{0,1\}\cup\left\{\lag -\infty,\frac{1}{2}\rog\right\}$, as follows:
\begin{itemize}
\item for each $t,t'\in T$, with $t<t'$, $\lag-\infty,0\rog<_{T''}\lag t,0\rog<_{T''}\lag t',0\rog<_{T''}\lag -\infty,\frac{1}{2}\rog<_{T''}\lag t',1\rog<_{T''}\lag t,1\rog<_{T''}\lag-\infty,1\rog$.
\end{itemize}
\end{definition}
Now:
\begin{definition}\label{defform}
For each temporal assignment $v$ (over a temporal flow $\lag T,\leq\rog$) the function \mbox{$s^v:\ FORM\to T''$} has the following behavior:
\begin{itemize}
\item $s^v(x_i)=d(x_i^v)$.
\item $s^v(\bot)=\lag -\infty,0\rog$.
\item If $s^v(\varphi)=\lag a,n\rog$ and $s^v(\psi)=\lag b,n'\rog$, then
\begin{align*}
s^v(\neg\varphi)&=\lag a,1-n\rog\\
s^v(\varphi\to\psi)&=
\begin{cases}
\lag-\infty,1\rog&\text{If }\lag a,n\rog\leq_{T''}\lag b,n'\rog\\
s^v(\neg\varphi)\curlyvee s^v(\psi)&\text{Otherwise}
\end{cases}
\end{align*}
Where $\curlyvee$ denotes the maximum over $\leq_{T''}$.
\end{itemize}
\end{definition}
It is immediate to check that
\begin{proposition}\label{prop:inv}
For each formula $\varphi$, and temporal assignment $v$ it holds that:
\begin{equation*}
s^v(\neg\neg\varphi)=s^v(\varphi).
\end{equation*}
\end{proposition}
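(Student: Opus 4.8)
The plan is to prove $s^v(\neg\neg\varphi) = s^v(\varphi)$ directly from the recursive definition of $s^v$ on negation, since the statement concerns a single application of the negation clause iterated twice. First I would write $s^v(\varphi) = \langle a, n\rangle$ for some $a \in T'$ and $n \in \{0,\tfrac{1}{2},1\}$, which is legitimate because \Cref{def:defbeh} and \Cref{lem:def} guarantee that $s^v$ always returns an element of $T''$ of exactly this form. The key computation is then purely a matter of applying the negation clause of \Cref{defform} twice and checking that the second component returns to its original value.

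The main step is to show that the map $n \mapsto 1 - n$ on the second coordinate is an involution on the value set $\{0,\tfrac{1}{2},1\}$. Concretely, applying the negation clause once gives $s^v(\neg\varphi) = \langle a, 1-n\rangle$, and applying it a second time gives $s^v(\neg\neg\varphi) = \langle a, 1-(1-n)\rangle = \langle a, n\rangle = s^v(\varphi)$. I would verify the three cases explicitly: if $n = 0$ then $1 - n = 1$ and $1 - (1-n) = 0 = n$; if $n = 1$ then symmetrically we return to $1$; and if $n = \tfrac{1}{2}$ then $1 - \tfrac{1}{2} = \tfrac{1}{2}$, which is its own complement, so the pair is fixed already after one application. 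In every case the first component $a$ is carried through unchanged by the negation clause, so it plays no role beyond being preserved.

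The one point requiring a little care is that the negation clause of \Cref{defform} is stated only for pairs of the form $\langle a, n\rangle$, and one must confirm that $\langle a, 1-n\rangle$ is again a legitimate element of $T''$ so that the clause can be reapplied; this holds because $T'' = T' \times \{0,1\} \cup \{\langle -\infty, \tfrac{1}{2}\rangle\}$ is closed under the coordinatewise operation $n \mapsto 1-n$ (when $n \in \{0,1\}$ we stay in $T' \times \{0,1\}$, and when $n = \tfrac{1}{2}$ we necessarily have $a = -\infty$ so we land back on $\langle -\infty, \tfrac{1}{2}\rangle$). I do not expect any genuine obstacle here: the result is essentially the statement that the involutivity of $\neg_3$ (noted already in the discussion of \Cref{tab:1}) lifts to the definitive-behavior level, and the proof is the short case analysis just described. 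This matches the paper's own assessment that the proposition is \emph{immediate to check}.
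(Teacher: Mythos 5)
Your proof is correct and takes exactly the route the paper intends when it calls the proposition ``immediate to check'': one applies the negation clause of \Cref{defform} twice and uses the involutivity of $n\mapsto 1-n$ on $\left\{0,\frac{1}{2},1\right\}$, with the first component carried through unchanged. The additional verification that $\lag a,1-n\rog$ remains in $T''$ is a sensible touch of rigor but raises no issue, so nothing further is needed.
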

The following theorem shows that \Cref{def:form1} and \Cref{defform} are equivalent, from the point of view of the ``definitive behavior'' of an assignment.
\begin{theorem}
Let $v$ be a temporal assignment. For every formula $\varphi$ it holds that 
\begin{equation*}
s^v(\varphi)=d(\varphi^v).
\end{equation*}
\end{theorem}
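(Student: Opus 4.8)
The plan is to argue by structural induction on $\varphi$, matching the recursively computed $s^v(\varphi)$ of \Cref{defform} against the definitive behavior $d(\varphi^v)$ of \Cref{def:defbeh}. The base cases are immediate: for $\varphi=x_i$ we have $s^v(x_i)=d(x_i^v)$ by definition, and for $\varphi=\bot$ both sides equal $\langle-\infty,0\rangle$ since $\bot^v\approx 0$. For the negation step, assuming the claim for $\psi$, I would use that $(\neg\psi)^v(t)=\neg_3\psi^v(t)=1-\psi^v(t)$ holds pointwise at every instant; hence passing to the definitive behavior only exchanges the second coordinate ($0\leftrightarrow 1$, with $\frac{1}{2}$ fixed) while leaving the transition instant untouched. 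This is exactly the clause $s^v(\neg\psi)=\langle a,1-n\rangle$ of \Cref{defform}, so the inductive hypothesis $s^v(\psi)=d(\psi^v)$ yields $s^v(\neg\psi)=d((\neg\psi)^v)$.

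The core is the implication step; I would treat a subformula of the form $\varphi\to\psi$ and assume the statement for $\varphi$ and $\psi$. First I would invoke the inductive hypothesis to replace $s^v(\varphi)$ and $s^v(\psi)$ by $d(\varphi^v)$ and $d(\psi^v)$. The key observation is that the clause defining $s^v(\varphi\to\psi)$ is precisely the NM-chain residuum on the totally ordered set $\langle T'',\leq_{T''}\rangle$, equipped with the involutive, order-reversing negation $\langle a,m\rangle\mapsto\langle a,1-m\rangle$ whose unique fixpoint is $\langle-\infty,\frac{1}{2}\rangle$: it returns the top element $\langle-\infty,1\rangle$ when the antecedent's behavior is $\leq_{T''}$ the consequent's, and otherwise returns the $\leq_{T''}$-maximum $\curlyvee$ of the negated antecedent $s^v(\neg\varphi)$ and the consequent $s^v(\psi)$. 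It then suffices to check, for each of the finitely many combinations of definitive-behavior types of $\varphi$ and $\psi$, that this residuum yields the same element of $T''$ that \Cref{lem:def} assigns to the definitive behavior of $(\varphi\to\psi)^v$; since \Cref{lem:def} already guarantees that $(\varphi\to\psi)^v$ satisfies \Cref{cond:1}, the operator $d$ is defined on it and the comparison is between two explicit lists.

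I expect this case analysis to be the main obstacle, and within it the delicate cases are those where $\varphi$ and $\psi$ point in the same ``direction'' with distinct transition instants. There $\leq_{T''}$ runs straight on the eventually-false region but is reversed on the eventually-true region, and one must verify that the comparison $d(\varphi^v)\leq_{T''}d(\psi^v)$ together with $\curlyvee$ reproduces exactly the $\max$/$\min$ of transition instants recorded in \Cref{lem:def}. For instance, $\varphi^v\approx t_1$ and $\psi^v\approx t'_1$ with $t<t'$ is not a $\leq_{T''}$-inequality, so the ``Otherwise'' branch computes $\langle t,0\rangle\curlyvee\langle t',1\rangle=\langle t',1\rangle$, matching $\approx t'_1$; whereas $t\geq t'$ does give $d(\varphi^v)\leq_{T''}d(\psi^v)$ and hence the top, matching $\approx 1$. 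Likewise $\varphi^v\approx t_1$, $\psi^v\approx t'_0$ falls in the ``Otherwise'' branch and $\curlyvee$ selects the later transition $\langle\max(t,t'),0\rangle$ within the false region, matching $\approx\max(t,t')_0$. Checking the remaining combinations in the same manner — including the constant and $\frac{1}{2}$ cases, where $\langle-\infty,0\rangle$ and $\langle-\infty,1\rangle$ act as an absorbing bottom and top and $\langle-\infty,\frac{1}{2}\rangle$ is the negation fixpoint — exhausts \Cref{lem:def} and closes the induction.
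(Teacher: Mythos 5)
Your proof is correct and takes essentially the same route as the paper: the paper's proof is a one-line appeal to ``direct inspection'' of \Cref{def:form1,defform} together with \Cref{lem:def}, and your structural induction --- the base cases, the negation step, and the case-by-case matching of the residuum-style clause of \Cref{defform} against the implication table established in \Cref{lem:def} --- is precisely that inspection written out in detail.
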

\begin{proof}
A direct inspection from \Cref{def:form1,defform} and \Cref{lem:def}.
\end{proof}
\section{Completeness}
In this section we show that the temporal semantics previously introduced is complete w.r.t. the logic NM.

\medskip
\noindent We begin with the following proposition:
\begin{proposition}\label{prop:isoneg}
Given a temporal flow $\lag T,\leq \rog$ there is an NM-chain $\mathcal{A}_T$, with negation fixpoint $\lag -\infty,\frac{1}{2}\rog$, whose lattice reduct is $\lag T'',\leq_{T''}\rog$.
\end{proposition}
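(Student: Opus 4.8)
The plan is to exploit the fact, recalled in \Cref{subsec:sem}, that in any NM-chain the monoidal operation $*$ and its residuum $\Rightarrow$ are completely determined by the underlying total order together with a strong negation $n$. Thus it suffices to equip the chain $\lag T'',\leq_{T''}\rog$ with an appropriate strong negation and then to define $*$ and $\Rightarrow$ by the two displayed formulas; totality of $\leq_{T''}$ guarantees prelinearity and makes the residuation law routine, so that the whole construction reduces to producing the right negation.

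First I would record that $\lag T'',\leq_{T''}\rog$ is a bounded total order: by construction its least element is $\lag -\infty,0\rog$ and its greatest element is $\lag -\infty,1\rog$, so I set $0\df \lag -\infty,0\rog$ and $1\df\lag-\infty,1\rog$, and I take $\sqcap,\sqcup$ to be the meet and join of $\leq_{T''}$. Guided by the clause $s^v(\neg\varphi)=\lag a,1-n\rog$ of \Cref{defform}, I would define the candidate negation by fixing the first coordinate and acting on the second one as $0\mapsto 1$, $1\mapsto 0$, $\tfrac12\mapsto\tfrac12$; that is, $n(\lag a,0\rog)=\lag a,1\rog$ and $n(\lag a,1\rog)=\lag a,0\rog$ for $a\in T'$, together with $n(\lag-\infty,\tfrac12\rog)=\lag-\infty,\tfrac12\rog$.

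The heart of the argument is to verify that $n$ is a strong negation. Involutivity and the boundary condition $n(0)=1$ are immediate, and $\lag-\infty,\tfrac12\rog$ is clearly its unique fixpoint. The one step requiring care is that $n$ reverses $\leq_{T''}$: here one must keep in mind that in the defining chain the elements $\lag t,1\rog$ are listed in the order opposite to that of $T$ (one has $\lag t',1\rog<_{T''}\lag t,1\rog$ exactly when $t<t'$), whereas the $\lag t,0\rog$ follow the order of $T$. Consequently $n$ carries the increasing block of $\lag t,0\rog$ onto the block of $\lag t,1\rog$ while flipping the direction, and symmetrically for the converse block, the central fixpoint being preserved; running a case analysis along the single chain $\lag-\infty,0\rog<_{T''}\cdots<_{T''}\lag-\infty,\tfrac12\rog<_{T''}\cdots<_{T''}\lag-\infty,1\rog$ confirms $x<_{T''}y\Rightarrow n(y)<_{T''}n(x)$. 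This bookkeeping of the two opposite orientations is where I expect the only genuine obstacle to lie.

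Finally, with $n$ established as a strong negation on the bounded chain $\lag T'',\leq_{T''}\rog$, I would define $*$ and $\Rightarrow$ by the two formulas of \Cref{subsec:sem} and call the resulting structure $\mathcal{A}_T$. That $\mathcal{A}_T$ is an NM-chain then follows from the standard representation of NM-chains as bounded chains endowed with a strong negation and the associated residuated pair; by construction its negation fixpoint is $\lag-\infty,\tfrac12\rog$ and its lattice reduct is exactly $\lag T'',\leq_{T''}\rog$, which is the assertion. The verification of the MTL axioms and the two NM equations needs no separate computation, being subsumed by this characterization once order-reversal of $n$ has been checked.
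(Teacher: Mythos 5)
Your proposal is correct and follows essentially the same route as the paper: reduce the problem to exhibiting a strong negation on the bounded chain $\lag T'',\leq_{T''}\rog$, define it as $n(\lag a,b\rog)=\lag a,1-b\rog$, and invoke the fact from \Cref{subsec:sem} that the order together with a strong negation determines the NM-chain operations. The only difference is that you spell out the order-reversal verification (the two blocks running in opposite orientations) which the paper compresses into ``an easy check.''
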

\begin{proof}
From the results of \Cref{subsec:sem}, we know that the operations of an NM-chain are defined over the order and the negation function. Hence it remains only to show a strong negation function $n$ over $T''$. Since every element of $x$ of $T''$ has the form $\lag a,b\rog$, with $a\in T'$, and $b\in\left\{0,\frac{1}{2},1\right\}$, then an easy check shows that the function $n:\, T''\to T''$ such that
\begin{equation*}
n(\lag a,b\rog)=\lag a,1-b\rog
\end{equation*}
is a strong negation function over $T''$.
\end{proof}
\begin{proposition}\label{prop:tmpas}		
Let $\lag T,\leq\rog$ be a temporal flow and $\varphi$ be a formula. For every temporal assignment $v$ there is an $\mathcal{A}_T$-assignment $v'$ such that $s^v(\varphi)=v'(\varphi)$; conversely, for every $\mathcal{A}_T$-assignment $w$ there is a temporal assignment $w'$ over $\lag T,\leq\rog$ such that $s^{w'}(\varphi)=w(\varphi)$.
\end{proposition}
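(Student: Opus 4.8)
The plan is to recognize that the recursion defining $s^v$ in \Cref{defform} is, clause by clause, exactly the term-function recursion of an $\mathcal{A}_T$-assignment in the sense of \Cref{sem:as}. By \Cref{prop:isoneg} the negation of $\mathcal{A}_T$ is $n(\lag a,b\rog)=\lag a,1-b\rog$, which reproduces the clause $s^v(\neg\varphi)=\lag a,1-n\rog$; and the residuum of an NM-chain recalled in \Cref{subsec:sem}, namely $x\Rightarrow y=\lag -\infty,1\rog$ if $x\leq_{T''}y$ and $\max(n(x),y)$ otherwise, reproduces the clause for $s^v(\varphi\to\psi)$ once one notes that $s^v(\neg\varphi)=n(s^v(\varphi))$ and that $\curlyvee$ is the $\leq_{T''}$-maximum. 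Since $\neg\varphi$ abbreviates $\varphi\to\bot$ and $n(x)=x\Rightarrow 0$, the map $s^v\colon FORM\to T''$ therefore satisfies all the defining equations of an assignment, so by the uniqueness of inductive extensions it \emph{is} the unique $\mathcal{A}_T$-assignment extending its own restriction $x\mapsto s^v(x)$ to variables. Both halves of the statement will follow from this single identification.

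For the first half, given a temporal assignment $v$ I would put $e(x)\df s^v(x)=d(x^v)\in T''$, which is well defined by \Cref{lem:def,def:defbeh}, and let $v'$ be the $\mathcal{A}_T$-assignment that $e$ induces. A short induction on $\varphi$ gives $v'(\varphi)=s^v(\varphi)$: the cases $\varphi=x$ and $\varphi=\bot$ hold by construction, and the cases $\varphi=\neg\psi$ and $\varphi=\psi\to\chi$ hold because, by the identification above, $v'$ and $s^v$ apply the very same operations of $\mathcal{A}_T$ to the inductively equal values on the immediate subformulas. In particular equality holds for the fixed formula in the statement.

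For the converse, given an $\mathcal{A}_T$-assignment $w$ induced by $e_w\colon VAR\to T''$, I would build a temporal assignment $w'$ over variables by realizing each value $e_w(x)\in T''$ as the definitive behavior of an admissible sequence in the sense of \Cref{cond:1,def:var}: send $\lag -\infty,i\rog$ to the sequence constant to $i$, and send $\lag t,i\rog$ with $i\in\{0,1\}$ to the sequence that equals $\frac{1}{2}$ at every instant below $t$ and equals $i$ at every instant $\geq t$. By \Cref{def:defbeh} this yields $d(x^{w'})=e_w(x)$, that is $s^{w'}(x)=w(x)$ on variables, and the identification of $s^{w'}$ with an $\mathcal{A}_T$-assignment upgrades this to $s^{w'}(\varphi)=w(\varphi)$ for all $\varphi$.

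The step I expect to be the main obstacle is exactly this last \emph{realizability}: one must verify that every element of $T''$ is the $d$-image of an admissible sequence and that $\approx$, a partial map by \Cref{lem:cldisg}, assigns it the intended type. The delicate values are the boundary ones: if $\lag T,\leq\rog$ possesses a least element $m$, then \Cref{cond:1,def:var} forbid a switching instant $t=m$, so the sequence I would attach to $\lag m,0\rog$ or $\lag m,1\rog$ collapses to a constant and its definitive behavior becomes $\lag -\infty,0\rog$ (respectively $\lag -\infty,1\rog$). A fully clean argument thus presupposes that $T$ has no least element --- as holds for the flows relevant to the completeness theorem --- or else an extra argument that such endpoint values never have to be matched on the fixed $\varphi$.
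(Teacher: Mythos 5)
Your proposal follows the same route as the paper's own proof: the forward half is essentially the paper's argument (put $v'(x)\df s^v(x)$ on the variables of $\varphi$ and run the induction, using that the clauses of \Cref{defform} are exactly the NM-chain operations of $\mathcal{A}_T$ given by \Cref{prop:isoneg}), and the converse half is what the paper compresses into the single phrase ``with an analogous argument''. Where you go beyond the paper is in actually carrying out that converse, i.e.\ realizing every element of $T''$ as the $d$-image of an admissible sequence, and the obstacle you isolate there is entirely real --- it is a gap in the paper's statement, not in your argument. If $T$ has a least element $m$, the universe $T''=T'\times\{0,1\}\cup\left\{\lag -\infty,\tfrac{1}{2}\rog\right\}$ of $\mathcal{A}_T$ contains $\lag m,0\rog$ and $\lag m,1\rog$, but \Cref{cond:1} and \Cref{def:var} forbid a switching instant equal to $\min(T)$ (necessarily so: otherwise the constant sequences would receive two distinct $\approx$-types and \Cref{lem:cldisg} would fail), so by \Cref{def:defbeh} neither point is $d(x^{w'})$ for any admissible temporal assignment $w'$. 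Moreover, your hoped-for second escape route --- that such endpoint values ``never have to be matched on the fixed $\varphi$'' --- does not exist: take $\varphi=x$ a variable and an $\mathcal{A}_T$-assignment with $w(x)=\lag m,1\rog$, and the converse half of the proposition is literally false.

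The clean repair is therefore the other one you name, in one of two equivalent forms: either restrict temporal flows to have no minimum (all flows used later in the paper, namely $\lag\mathbb{N},\geq_\mathbb{N}\rog$, $\lag\mathbb{Q},\leq_\mathbb{Q}\rog$ and $\lag\mathbb{R},\leq_\mathbb{R}\rog$, are of this kind), or delete the two offending points from $T''$ when $\min(T)$ exists. The deleted set is closed under $n$ (which swaps $\lag m,0\rog$ and $\lag m,1\rog$), under $\min$ and $\max$, and under $*$ and $\Rightarrow$ (whose outputs are always $0$, $1$, $\min$, or $\max(n(\cdot),\cdot)$ of the inputs), so it is still an infinite NM-chain with negation fixpoint $\lag -\infty,\tfrac{1}{2}\rog$; hence \Cref{teo:nmtmp} and \Cref{teo:comp} survive with this smaller algebra in place of $\mathcal{A}_T$. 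With either amendment your proof is complete, and it is more careful than the one printed in the paper.
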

\begin{proof}
Let $v$ be a temporal assignment over $\lag T,\leq\rog$: take an $\mathcal{A}_T$-assignment $v'$ such that $v'(x)=s^v(x)$, for every variable $x$ of $\varphi$. This can be done, since the support of $\mathcal{A}_T$ is $T''$. The fact that $v'(\varphi)=s^v(\varphi)$ follows immediately by inspecting \Cref{sem:as,defform} and \Cref{prop:isoneg}, since the semantics given to the various connectives is the same, for both the assignments.

Conversely, with an analogous argument, given an $\mathcal{A}_T$-assignment $w$ we can find a temporal assignment $w'$ over $\lag T,\leq\rog$ such that $w(\varphi)=s^{w'}(\varphi)$.
\end{proof}
\begin{theorem}\label{teo:nmtmp}
Let $\lag T,\leq\rog$ be a temporal flow. Then, for every formula $\varphi$ and theory $\Gamma$ it holds that
\begin{equation*}
\Gamma \models_T \varphi\qquad\text{iff}\qquad\Gamma\models_{\mathcal{A}_T}\varphi,
\end{equation*}
where $\Gamma \models_T \varphi$ means that for every temporal assignment $v$ such that $s^v(\psi)=\lag -\infty,1\rog$ for every $\psi\in\Gamma$, it holds that $s^v(\varphi)=\lag -\infty,1\rog$.
\end{theorem}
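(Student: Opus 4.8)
The plan is to observe that, once the definitions are unwound, $\Gamma\models_T\varphi$ and $\Gamma\models_{\mathcal{A}_T}\varphi$ are the same assertion phrased over two interchangeable families of assignments, with the pair $\lag -\infty,1\rog$ serving as the designated value in both cases. The proof is then a transport of structure along the correspondence of \Cref{prop:tmpas}.

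First I would record that $\lag -\infty,1\rog$ is precisely the top element $1$ of the NM-chain $\mathcal{A}_T$. This is immediate from \Cref{prop:isoneg} together with the definition of $\leq_{T''}$: there one has $\lag t,1\rog<_{T''}\lag -\infty,1\rog$ for every $t\in T$, so $\lag -\infty,1\rog$ is the maximum of $\lag T'',\leq_{T''}\rog$ and hence the algebraic $1$. Consequently $\Gamma\models_{\mathcal{A}_T}\varphi$ unwinds, via the definition of consequence in \Cref{subsec:sem}, to the statement: for every $\mathcal{A}_T$-assignment $w$, if $w(\psi)=\lag -\infty,1\rog$ for all $\psi\in\Gamma$ then $w(\varphi)=\lag -\infty,1\rog$. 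This is syntactically parallel to the definition of $\Gamma\models_T\varphi$ given in the theorem.

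Next I would bridge the two families of assignments using \Cref{prop:tmpas}. For the implication from $\models_T$ to $\models_{\mathcal{A}_T}$, take an arbitrary $\mathcal{A}_T$-assignment $w$ sending every $\psi\in\Gamma$ to $\lag -\infty,1\rog$; \Cref{prop:tmpas} supplies a temporal assignment $w'$ with $s^{w'}(\chi)=w(\chi)$ for every formula $\chi$. Hence $s^{w'}(\psi)=\lag -\infty,1\rog$ for all $\psi\in\Gamma$, and $\Gamma\models_T\varphi$ forces $s^{w'}(\varphi)=\lag -\infty,1\rog$, i.e. $w(\varphi)=\lag -\infty,1\rog=1$. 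The reverse implication is symmetric: starting from a temporal assignment $v$ that sends each hypothesis to $\lag -\infty,1\rog$, one invokes the other half of \Cref{prop:tmpas} to obtain a matching $\mathcal{A}_T$-assignment $v'$ and argues identically.

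There is no deep obstacle here, so the only point I would treat with care is the \emph{uniformity} of the correspondence in \Cref{prop:tmpas}. One must check that the assignment it produces agrees with the original on \emph{every} formula at once, not merely on a single prescribed $\varphi$; this holds because the assignment is fixed on the variables and the connectives $\neg,\to$ are then interpreted identically on both sides (by \Cref{sem:as} and \Cref{defform}, using \Cref{prop:isoneg}), so the agreement propagates by induction to all formulas. Without this uniformity the simultaneous quantification over $\Gamma$ and $\varphi$ could not be discharged by one and the same assignment, which is exactly what the argument above requires.
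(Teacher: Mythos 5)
Your proposal is correct and takes essentially the same route as the paper: the paper's entire proof is ``Immediate from \Cref{prop:tmpas}'', and your argument is exactly that transport of assignments between the temporal and algebraic semantics, spelled out in detail (including the identification of $\lag -\infty,1\rog$ with the top element of $\mathcal{A}_T$). Your closing remark on uniformity---that the single $\mathcal{A}_T$-assignment produced by \Cref{prop:tmpas} must agree with $s^v$ on \emph{all} formulas of $\Gamma\cup\{\varphi\}$ simultaneously, which holds because the correspondence is fixed on variables and propagates inductively---is a legitimate tightening of a detail the paper leaves implicit, not a different approach.
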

\begin{proof}
Immediate from \Cref{prop:tmpas}.
\end{proof}
We finally obtain
\begin{theorem}[Completeness theorem]\label{teo:comp}
Let $\lag T,\leq\rog$ be a temporal flow. Then for each formula $\varphi$ and finite theory $\Gamma$
\begin{equation*}
\Gamma\vdash_{NM}\varphi\quad\text{iff}\quad \Gamma\models_T\varphi.
\end{equation*}
\end{theorem}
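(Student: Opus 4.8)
The plan is to obtain the statement simply by chaining together the three preparatory results already established, so that no genuinely new argument is needed. The strategy is: identify the temporal semantics with the algebraic semantics over the NM-chain $\mathcal{A}_T$, and then invoke the (finite strong) algebraic completeness of NM with respect to that chain.

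First I would check that the NM-chain $\mathcal{A}_T$ furnished by \Cref{prop:isoneg} satisfies the hypotheses of \Cref{teo:fcomp}. By construction its lattice reduct is $\lag T'',\leq_{T''}\rog$ and it possesses the negation fixpoint $\lag-\infty,\frac{1}{2}\rog$, so it is an NM-chain with negation fixpoint. It is moreover infinite: since by assumption the temporal flow $\lag T,\leq\rog$ is an infinite totally ordered set, the underlying set $T''=T'\times\{0,1\}\cup\left\{\lag-\infty,\frac{1}{2}\rog\right\}$ (with $T'=T\cup\{-\infty\}$) is infinite as well. Hence \Cref{teo:fcomp} is applicable to $\mathcal{A}=\mathcal{A}_T$, and it yields that NM is finitely strongly complete with respect to $\mathcal{A}_T$; that is, for every finite theory $\Gamma$ and every formula $\varphi$ one has $\Gamma\vdash_{NM}\varphi$ if and only if $\Gamma\models_{\mathcal{A}_T}\varphi$.

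Then I would invoke \Cref{teo:nmtmp}, which gives the equivalence $\Gamma\models_{\mathcal{A}_T}\varphi$ if and only if $\Gamma\models_T\varphi$ for an arbitrary theory $\Gamma$, in particular for every finite one. Composing the two biconditionals — finite strong completeness with respect to $\mathcal{A}_T$, followed by the identification of $\models_{\mathcal{A}_T}$ with $\models_T$ — immediately produces $\Gamma\vdash_{NM}\varphi$ if and only if $\Gamma\models_T\varphi$, which is exactly the claim.

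The only point demanding care, and the one I would flag as the main (albeit minor) obstacle, is the verification that $\mathcal{A}_T$ is genuinely infinite, since this is precisely what licenses the use of \Cref{teo:fcomp}: here the standing assumption that the temporal flow $T$ is infinite is essential, and it is what guarantees the cardinality of $T''$. Everything else is a direct substitution into equivalences already proved earlier in the paper, so no further computation is required.
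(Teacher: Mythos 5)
Your proposal is correct and follows exactly the same route as the paper's own proof: invoke \Cref{prop:isoneg} to see that $\mathcal{A}_T$ is an infinite NM-chain with negation fixpoint, apply \Cref{teo:fcomp} to get finite strong completeness of NM with respect to $\mathcal{A}_T$, and then transfer $\models_{\mathcal{A}_T}$ to $\models_T$ via \Cref{teo:nmtmp}. Your added remark that the infiniteness of $\mathcal{A}_T$ rests on the standing assumption that temporal flows are infinite totally ordered sets is a worthwhile explicit check that the paper leaves implicit.
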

\begin{proof}
Let $\lag T,\leq\rog$ be a temporal flow: as shown in \Cref{prop:isoneg} $\mathcal{A}_T$ is an infinite NM-chain with negation fixpoint. From \Cref{teo:fcomp} we have that NM is finitely strongly complete w.r.t. $\mathcal{A}_T$, and by applying \Cref{teo:nmtmp} we show the claim of the theorem.
\end{proof}
Now we conclude by giving some examples of temporal flows connected to interesting NM-chains:
\begin{ex}
Let $\lag T,\leq\rog=\lag\mathbb{N},\geq_\mathbb{N}\rog$: it follows that $\mathcal{A}_T\simeq NM_\infty$. Thanks to \Cref{teo:comp} we have that, for each formula $\varphi$ and each \emph{finite} theory $\Gamma$:
\begin{equation*}
\Gamma\vdash_{NM}\varphi\quad\text{iff}\quad \Gamma\models_{\mathcal{A}_T}\varphi.
\end{equation*}
Note, however, that the strong completeness does not hold: this is a consequence of \Cref{rem:comp}, and \Cref{teo:nmtmp}.

Consider now $\lag T,\leq\rog\in\left\{\lag\mathbb{R},\leq_\mathbb{R}\rog,\lag\mathbb{Q},\leq_\mathbb{Q}\rog\right\}$: it follows that $\mathcal{A}_T\simeq [0,1]_{NM}$ or $\mathcal{A}_T\simeq [0,1]^\mathbb{Q}_{NM}$. From \Cref{theorem:nmstd,teo:nmtmp}, and with an argument similar to the one given in the proof of \Cref{teo:comp} we have that for each formula $\varphi$ and theory $\Gamma$:
\begin{equation*}
\Gamma\vdash_{NM}\varphi\quad\text{iff}\quad \Gamma\models_{T}\varphi.
\end{equation*}
\end{ex}
\section{Temporal semantics and rough sets}
In this section we discuss in more detail the connections between the temporal semantics previously described, and rough sets. As already pointed out in \Cref{sec:1}, from the mathematical point of view the algebras of the variety generated by the three elements Wajsberg-algebra $\mathbf{L}^w_3$ are term-equivalent with pre-rough algebras.
Moreover, in \cite{ban} a logical counterpart of pre-rough algebras, rough logic ($\mathcal{RL}$), has been introduced, by showing the following result:
\begin{theorem}[{\cite[Theorems 3.3, 3.5]{ban}}]
For every theory $\Gamma$ and formula $\varphi$ in the language of $\mathcal{RL}$ it holds that (here $\mathbf{R}$ represents the class of pre-rough algebras)
\begin{equation*}
\Gamma\vdash_{\mathcal{RL}}\varphi\qquad\text{iff}\qquad\Gamma\vdash_{\text{\L}_3}\varphi\qquad\text{iff}\qquad \Gamma\models_{\mathbf{L}^w_3}\varphi\qquad\text{iff}\qquad \Gamma\models_{\mathbf{R}}\varphi.
\end{equation*}
\end{theorem}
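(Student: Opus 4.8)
The plan is to split the four-fold equivalence into three independent links and then chain them, so that no direct translation between the two proof calculi is ever needed. I would label the statements $(1)$ $\Gamma\vdash_{\mathcal{RL}}\varphi$, $(2)$ $\Gamma\vdash_{\text{\L}_3}\varphi$, $(3)$ $\Gamma\models_{\mathbf{L}^w_3}\varphi$, and $(4)$ $\Gamma\models_{\mathbf{R}}\varphi$, and establish $(2)\Leftrightarrow(3)$, $(3)\Leftrightarrow(4)$, and $(1)\Leftrightarrow(4)$; transitivity then delivers the remaining cases, in particular the otherwise awkward $(1)\Leftrightarrow(2)$ between logics written in different languages.

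For $(2)\Leftrightarrow(3)$ I would appeal to Wajsberg's completeness theorem for \L$_3$, sharpened to strong completeness: because $\mathbf{L}^w_3$ is finite, the consequence relation it induces is finitary, and \L$_3$ is strongly complete with respect to it, so derivability in \L$_3$ and validity over $\mathbf{L}^w_3$ coincide for every theory $\Gamma$, not merely finite ones.

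For $(3)\Leftrightarrow(4)$ I would use the term-equivalence of \Cref{teo:l3eq} together with the fact that $\mathbf{L}^w_3$ generates the variety of three-valued Wajsberg algebras, which under the term-equivalence is precisely the variety $\mathbf{R}$ of pre-rough algebras. The direction $(4)\Rightarrow(3)$ is immediate, since the pre-rough algebra term-equivalent to $\mathbf{L}^w_3$ is itself a member of $\mathbf{R}$, so consequence over the whole class specializes to consequence over this one algebra. For $(3)\Rightarrow(4)$ I would argue through subdirect decomposition: every algebra in $\mathbf{R}$ is a subdirect product of subdirectly irreducible algebras, and the only subdirectly irreducible three-valued Wajsberg algebras embed into $\mathbf{L}^w_3$ (namely the two- and three-element ones). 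Given an assignment into such a subdirect product that sends every formula of $\Gamma$ to $1$, each projection is an $\mathbf{L}^w_3$-assignment still sending $\Gamma$ to $1$, hence sending $\varphi$ to $1$ by $(3)$; since the value $1$ is detected componentwise in a subdirect product, $\varphi$ is sent to $1$ globally.

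The genuinely logic-specific work, and the step I expect to be the main obstacle, is $(1)\Leftrightarrow(4)$: soundness and completeness of the Hilbert calculus $\mathcal{RL}$ with respect to the class of pre-rough algebras. Soundness $(1)\Rightarrow(4)$ is a routine check that each axiom of $\mathcal{RL}$ evaluates to $1$ in every pre-rough algebra and that the rules preserve the property of taking value $1$ under all assignments. The converse $(4)\Rightarrow(1)$ is the delicate part: I would construct the Lindenbaum--Tarski algebra of $\mathcal{RL}$ relative to $\Gamma$, quotienting formulas by interderivability modulo $\Gamma$, verify that the quotient satisfies the defining axioms P1--P12 and is therefore a genuine pre-rough algebra, and then read off a separating assignment for any $\varphi$ with $\Gamma\not\vdash_{\mathcal{RL}}\varphi$. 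The difficulty concentrates in confirming that the quotient really validates all the pre-rough axioms, especially the interaction axioms P8--P12 governing $L$, $M$, and $\Rightarrow$, which is exactly where the $\mathcal{RL}$ axiomatization has to be calibrated to match the algebraic structure.
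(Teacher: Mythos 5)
The key point you could not have known from the statement alone: the paper does not prove this theorem. It is imported verbatim from the cited work \cite{ban} (Theorems 3.3 and 3.5 there) and is used purely as background, to motivate the connection between NM and rough sets; nothing in the paper depends on reproving it. So there is no internal proof to compare yours against, and the only meaningful question is whether your plan would reconstruct the cited result.

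As a reconstruction, your strategy is sound and follows the standard algebraic route. All three links are viable: finitarity of $\models_{\mathbf{L}^w_3}$ does follow from finiteness of the algebra (compactness of the space of assignments), so Wajsberg's finite strong completeness upgrades to strong completeness for arbitrary theories; the subdirect-decomposition argument for $(3)\Rightarrow(4)$ is correct, since by J\'onsson's lemma the subdirectly irreducible members of the variety generated by $\mathbf{L}^w_3$ embed into $\mathbf{L}^w_3$, and an element of a subdirect product equals $1$ iff all its projections do; and a Lindenbaum--Tarski construction is indeed how completeness of $\mathcal{RL}$ is established. Two caveats, though. First, since $\Gamma$ and $\varphi$ are formulas in the language $\{\neg,\land,L\}$ of $\mathcal{RL}$, while {\L}$_3$ and $\mathbf{L}^w_3$ are presented in the language $\{\to,\neg\}$, items $(2)$ and $(3)$ only make sense through the translations described in \cite{ban}; your chain silently requires that these translations be exactly the term operations witnessing the term-equivalence of \Cref{teo:l3eq}, since otherwise the instance of $(2)\Leftrightarrow(3)$ you invoke (about {\L}ukasiewicz formulas) does not compose with $(3)\Leftrightarrow(4)$ (about $\mathcal{RL}$ formulas). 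This needs to be said explicitly. Second, your link $(1)\Leftrightarrow(4)$ is a plan rather than a proof: verifying that the Lindenbaum--Tarski quotient of $\mathcal{RL}$ satisfies P1--P12 requires the actual axiomatization of $\mathcal{RL}$, which appears only in \cite{ban}, and that verification is precisely the substance of the theorems being cited. So your proposal is a faithful outline of the intended argument, but its genuinely hard step remains where the paper leaves it: in \cite{ban}.
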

In particular, $\mathcal{RL}$ and \L$_3$ are defined over different sets of connectives: $\{\neg, \land, L\}$, and $\{\to,\neg\}$, respectively. However, in \cite{ban} it is shown that each set of connectives, as well as the associated logic, can be defined starting from the other, and two translations are described.

In the language of $\mathcal{RL}$, the necessity connective $L$ and its dual $M$ (defined as $M\varphi\df \neg L\neg\varphi$) are of particular interest. 
Indeed, they represents, intuitively, the lower and upper approximation of a set (described by a formula). According to the previously mentioned translations, in \L$_3$ the formula $L\varphi$ is defined as $\neg(\varphi\to\neg\varphi)$, whilst $M\varphi$ as $\neg L\neg\varphi$.

The semantics of these connectives, over $\mathbf{L}^w_3$, is depicted in \Cref{tab:2}.
\begin{table}[h]
\begin{center}
\begin{tabular}{|c|c|}
\hline
 $L_3$   &  \\
\hline
$0$  & $0$   \\
\hline
$\frac{1}{2}$  & $0$  \\
\hline
$1$  & $1$  \\
\hline
\end{tabular}
\hspace{2 cm}
\begin{tabular}{|c|c|}
\hline
 $M_3$   &  \\
\hline
$0$  & $0$   \\
\hline
$\frac{1}{2}$  & $1$  \\
\hline
$1$  & $1$  \\
\hline
\end{tabular}
\end{center}
\caption{Semantics of $L$ and $M$ over $\mathbf{L}^w_3$}
\label{tab:2}
\end{table}

As shown in the previous section, also NM is complete w.r.t. a semantics strictly connected with $\mathbf{L}^w_3$. The advantage of our temporal semantics is in its  expressive power, that allows to describe the behavior of formulas (in terms of their truth-values), over a temporal flow of time.

One can ask what we obtain if we define, over NM, two derived connectives $L$ and $M$ such that $L\varphi\df \neg(\varphi\to\neg\varphi)$ and $M\varphi\df\neg L\neg\varphi$. The answer is interesting, because a direct computation shows that, for every formula $\varphi$, $L\varphi$ is equivalent to $\varphi\&\varphi$, whilst $M\varphi$ is equivalent to $\varphi\veebar\varphi$. Note also that, due to \Cref{ex:l3}, we have that the semantics of $L$ and $M$ over the three element NM-chain coincide with the one of $\mathbf{L}^w_3$, and depicted in \Cref{tab:2}. In general, the semantics of $L$ and $M$ (we call $\mathcal{L}$ and $\mathcal{M}$ their algebraic counterparts), over an NM-chain, is the following:
\begin{align*}
\mathcal{L}x=&
\begin{cases}
0&\text{if }x\leq n(x)\\
x&\text{Otherwise.}
\end{cases}\\
\mathcal{M}x=&
\begin{cases}
1&\text{if }n(x)\leq x\\
x&\text{Otherwise.}
\end{cases}
\end{align*}
It can be easily checked from the results of \Cref{subsec:sem}. 

For more details about this topic, we refer to the forthcoming paper \cite{sh}. In this article, two operators defined like $L$ and $M$ are studied, for NM, as well as their logical and algebraic properties.

We now show the behavior of these two connectives over our temporal semantics. In particular, given a formula $\varphi$, and a temporal assignment $v$ (over some temporal flow) it holds that
\begin{align*}
v(L\varphi,t)=v(\neg(\varphi\to\neg\varphi),t)&=
\begin{cases}
\neg_3 (v(\varphi,t)\to_3 v(\neg\varphi,t))&\text{if }\neg_3 (v(\varphi,t)\to_3 v(\neg\varphi,t))=\\&\neg_3 (v(\varphi,t')\to_3 v(\neg\varphi,t')),\\
&\text{for every }t'\geq t\\
\frac{1}{2}&\text{otherwise.}
\end{cases}\\
v(M\varphi,t)=v(\neg\varphi\to\varphi,t)&=
\begin{cases}
v(\neg\varphi,t)\to_3 v(\varphi,t)&\text{if }v(\neg\varphi,t)\to_3 v(\varphi,t)=\\&v(\neg\varphi,t')\to_3 v(\varphi,t'),\\
&\text{for every }t'\geq t\\
\frac{1}{2}&\text{otherwise.}
\end{cases}
\end{align*}
Analogously to what happens for $\to$, also the temporal semantics of $L$ and $M$ is not truth-functional: this is not surprising, due to the way in which they are defined.
\section{Conclusions}
In this paper we have presented a three-valued temporal semantics for NM. It can be noted that, with some minor modifications, it can also be adapted for the logic NM$^-$, that is the logic corresponding to the variety of algebras generated by $[0,1]_\text{NM}\setminus \left\{\frac{1}{2}\right\}$ (\cite{gis,ct}).

Comparing the temporal semantics introduced in this paper with the one given for G\"{o}del logic in \cite{tmpg}, one of the main differences is that this last one is bivalent. From the technical point of view, it is possible to define a two-valued temporal semantics also for NM, but the problem is to find one that is not too much ``artificial'': future works could be addressed in this sense.

In this article we have also pointed out a first connection among NM and rough sets: however further directions of investigation are possible. For example, in \cite{bc} it is shown that NM is equivalent to constructive Nelson logic with strong negation (CLSN) plus prelinearity axiom \mbox{$(\varphi\to\psi)\vee(\psi\to\varphi)$}: since also pre-rough algebras are related with Nelson algebras (see for example \cite{pbook}), then future works could be addressed to analyze more in detail the connections among NM, pre-rough and Nelson algebras.
\section*{Acknowledgements}
The author would like to thanks to Professor Stefano Aguzzoli for the discussions and suggestions about the topic of this paper, that have furnished the original idea of this temporal semantics, and have contributed in a significant way to the development of this article.

\noindent Also, the author would like to thank to the reviewers, that with their suggestions have contributed to improve the readability and the content of the paper. 
\bibliographystyle{amsalpha}
\bibliography{temporalNM}
\end{document}